\newtheorem{theorem}{Theorem}[section]
\newtheorem{corollary}[theorem]{Corollary}
\newtheorem{lemma}[theorem]{Lemma}
\newtheorem{proposition}[theorem]{Proposition}
\theoremstyle{definition}
\newtheorem{remark}[theorem]{Remark}
\numberwithin{equation}{section}
\def\lc{{\rm{lc}}}
\def\ame{{\rm{ame}}}
\def\sme{{\rm{sme}}}
\newcommand{\A}{\mathbb{A}}
\newcommand{\C}{\mathbb{C}}
\def\sC{{\mathcal{C}}}
\def\osigma{\overline{\sigma}}
\def\oB{\overline{B}}
\def\tC{{\tilde{C}}}
\def\P{{\mathbb{P}}}
\def\O{{\mathcal O}}
\def\dim{\mbox{dim}}
\def\oX{{\overline{X}}}
\def\oY{{\overline{Y}}}
\def\sX{{\mathcal{X}}}
\def\tC{{\widetilde{C}}}
\def\sM{\mathcal{M}}
\def\oM{\overline{M}}
\def\soM{\overline{\mathcal{M}}}
\begin{document}

\title[The solvable monodromy extension property and varieties of log general type]{The SME property and varieties of log general type}

\author{Sabin Cautis}
\address{Department of Mathematics, University of Southern California, Los Angeles, CA}
\email{cautis@usc.edu}
\thanks{The author is thankful for the support received through NSF grant DMS-1101439 and the Alfred P. Sloan foundation.}

\dedicatory{Dedicated to Joe Harris, with admiration on the occasion of his sixtieth birthday.}

\begin{abstract}
We speculate on the relationship between the solvable monodromy extension (SME) property and log canonical models. A motivating example is the moduli space of smooth curves which, by earlier work, is known to have this SME property. In this case the maximal SME compactification is the moduli space of stable nodal curves which coincides with its log canonical model. 
\end{abstract}

\maketitle

\tableofcontents

\section{Introduction}

On a late afternoon a few years ago, on the way back from one of his water cooler trips, Joe Harris dropped by my alcove with the following problem on his mind:
\vspace{.1in} \\
{\it Given a family of smooth curves over an open subscheme $U \subset S$, when does it extend to a family of stable curves over $S$? }
\vspace{.1in} \\
We had not been discussing ideas directly along these lines, so this question took me a little by surprise. It seemed like a very natural problem and it subsequently became a part of my thesis. Using the language of moduli spaces, it can be restated as follows:
\vspace{.1in} \\
{\it Given an open subscheme $U \subset S$ and a morphism $f: U \rightarrow \sM_{g,n}$ when does it extend to a regular morphism $S \rightarrow \soM_{g,n}$?}
\vspace{.1in} \\
Here $\sM_{g,n}$ denotes the moduli space of smooth genus $g$ curves with $n$ marked points and $\soM_{g,n}$ its Deligne-Mumford compactification. De Jong and Oort \cite{JO} show that if $D := S \setminus U$ is a normal crossing divisor then $f: U \rightarrow \sM_{g,n}$ extends to a regular morphism $S \rightarrow \soM_{g,n}$ assuming it does this over the generic points of $D$. Without this assumption one can still conclude that $f$ extends to a map $S \rightarrow \oM_{g,n}$ where $\oM_{g,n}$ is the coarse moduli space.

In general, lifting a map $S \rightarrow \oM_{g,n}$ to $\soM_{g,n}$ only requires taking a finite cover of $S$ ({\it i.e.} one does not need to blow up at all). From this point of view the second answer above suffices. When working over $\C$, it turns out to have the following generalization.

\begin{theorem}\cite[Thm. A]{C}\label{thmA:C}
Let $U \subset S$ be an open subvariety of an irreducible, normal variety $S$. A morphism $U \rightarrow \sM_{g,n}$ extends to a regular map $S \rightarrow \oM_{g,n}$ in a Zariski neighbourhood of $p \in S \setminus U$ if and only if the local monodromy around $p$ is virtually abelian.
\end{theorem}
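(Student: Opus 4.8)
The plan is to reduce the statement to a local question about the fundamental group of a punctured neighborhood and its image in the mapping class group, and then exploit the Torelli-type structure of $\sM_{g,n}$. Since the question of extending $U \to \sM_{g,n}$ to a regular map into $\oM_{g,n}$ near a point $p \in S \setminus U$ is local on $S$, I would first shrink $S$ to a small neighborhood of $p$. After passing to a resolution of singularities and arranging $D = S \setminus U$ to be a normal crossing divisor (keeping track of how monodromy transforms), the local fundamental group $\pi_1$ of the punctured neighborhood is generated by loops around the branches of $D$, and the map $f$ induces a monodromy representation $\rho \colon \pi_1 \to \mathrm{Mod}_{g,n}$ into the mapping class group, valued in the image after picking a point of the universal cover / Teichmüller space.

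Let me think about the two directions separately.

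**For the forward direction** (extension implies virtually abelian monodromy), I would argue that if $f$ extends regularly to $p$, then $f(p) \in \oM_{g,n}$ corresponds to a stable curve $C_0$, and the nearby smooth fibers degenerate to $C_0$. The local monodromy is then governed by Dehn twists around the vanishing cycles determined by the nodes of $C_0$. These Dehn twists all lie in an abelian subgroup (they commute because the vanishing cycles are disjoint simple closed curves), so the monodromy is virtually abelian — the ``virtually'' accounting for the finite automorphisms of $C_0$ that the coarse space $\oM_{g,n}$ identifies. This is the easier direction.

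**For the converse** (virtually abelian monodromy implies extension), which I expect to be the crux, I would use the structure theory of mapping class groups: a virtually abelian subgroup of $\mathrm{Mod}_{g,n}$ is, after passing to a finite-index subgroup, generated by commuting elements, and by the Nielsen--Thurston classification each such element is a product of commuting Dehn twists and finite-order/reducible pieces supported on a canonical reduction system. The aim is to show that such monodromy data exactly prescribes a stable limit curve $C_0$, giving the value $f(p)$. After a finite base change to kill the ``virtual'' part (legitimate because lifting to $\oM_{g,n}$ only requires a finite cover, as noted in the excerpt), the commuting Dehn twists assemble into a semistable degeneration whose stable model defines the extension. The technical heart is to show that virtual abelianness forces the monodromy to be \emph{unipotent up to finite order with commuting twist parameters}, ruling out pseudo-Anosov behavior, which would produce genuinely hyperbolic (exponentially growing) monodromy incompatible with any limit in the compact space $\oM_{g,n}$.

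**The hard part will be** controlling the interaction between several branches of $D$ through $p$ simultaneously: one must check that the commuting Dehn-twist data coming from different local branches are compatible and glue to a single stable limit, rather than merely extending along each branch separately. Concretely, I expect the main obstacle to be establishing that a finitely generated virtually abelian subgroup of the mapping class group, generated by the monodromies around all branches, admits a \emph{common} reduction system — a single multicurve simultaneously fixed (up to isotopy and finite order) by all generators — since this multicurve is precisely what specifies the nodes of the limiting stable curve $C_0 = f(p)$. Once such a common reduction system is produced and the finite base change handles the finite-order ambiguity, the extension follows from the properness of $\oM_{g,n}$ together with the valuative/limit description of stable reduction.
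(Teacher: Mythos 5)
The paper does not prove this statement; it is quoted verbatim from \cite[Thm.~A]{C}, so there is no internal proof to compare against. Judged on its own terms, your proposal has the right general shape (localize, pass to a normal crossings model, study the image of $\pi_1$ in the mapping class group, use Birman--Lubotzky--McCarthy-type structure theory), and your forward direction is essentially correct once you add the local description of $\oM_{g,n}$ near a boundary point: a neighbourhood is a finite quotient of a polydisk with normal crossings boundary, so the local $\pi_1$ of the complement maps to $\mathrm{Mod}_{g,n}$ with image an extension of a finite group by the free abelian group on the Dehn twists about the vanishing cycles.

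There are, however, two genuine gaps in the converse. First, the structural claim that virtual abelianness ``rules out pseudo-Anosov behavior'' is false: an infinite cyclic group generated by a pseudo-Anosov element is abelian, and more generally the Birman--Lubotzky--McCarthy decomposition of an abelian subgroup allows pseudo-Anosov pieces on components of the complement of the canonical reduction system. What actually forces the monodromy around each individual branch of $D$ to be virtually a multitwist is the stable reduction theorem (the family is algebraic), not the group-theoretic hypothesis; the hypothesis is needed for the \emph{interaction} of the branches at the deep stratum containing $p$. Second, and more seriously, your concluding step --- ``the extension follows from the properness of $\oM_{g,n}$ together with the valuative/limit description of stable reduction'' --- only yields the extension over codimension-one points of $S\setminus U$, i.e.\ over the generic points of $D$. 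That much is true \emph{unconditionally} (it is the de Jong--Oort statement recalled in the introduction) and makes no use of the monodromy hypothesis. The entire content of the theorem is the extension across the higher-codimension point $p$ itself, where the valuative criterion says nothing; in \cite{C} this is handled by cutting to surfaces, resolving so that the relevant exceptional curve has negative self-intersection, and running an intersection-theoretic contraction argument of the kind visible in the proof of Lemma~\ref{lem:SME3} of the present paper. You also need to explain how to descend from the resolution (where $p$ becomes a connected exceptional locus $T$) back to $S$; as stated, your argument would at best produce a map on a blowup of $S$, not on a Zariski neighbourhood of $p$ in $S$.
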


\subsection{The abelian/solvable monodromy extension property}

Inspired by this result we introduced in \cite[Sect. 2]{C} the {\it abelian monodromy extension} (AME) property for a pair $(\sX, \oX)$ consisting of a Deligne-Mumford stack $\sX$ and a compactification $\oX$ of its coarse scheme. Roughly, $\oX$ is an AME compactification of $X$ if $U \rightarrow \sX$ extends to a regular map $S \rightarrow \oX$ whenever the image of the induced map $\pi_1(U) \rightarrow \pi_1(\sX)$ on fundamental groups is virtually abelian. Here $S$ is a small analytic neighbourhood of a point (so this is a local condition on the domain but a global condition on the target $(\sX, \oX)$). 

Among all AME compactifications of some $X$ there is a unique maximal one which we denote $X_{\ame}$ \cite[Cor. 3.7]{C}. This means that for any other AME compactification $\oX$ of $X$ there exists a birational morphism $X_{\ame} \rightarrow \oX$. For example, $\oM_{g,n}$ is the maximal AME compactification of $\sM_{g,n}$ \cite[Thm. 4.1]{C}. 

Although the AME condition is quite strong, there are abundant examples of pairs satisfying the AME property. More precisely:

\begin{proposition}\cite[Prop. 3.11]{C}\label{prop:lots}
Let $X \subset \oX$ be a dense, open immersion such that $\oX$ is a normal, complete variety. Then there exists an open $X^o \subset X$ such that $(X^o, \oX)$ has the AME property.
\end{proposition}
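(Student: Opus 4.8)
The plan is to verify the AME property straight from its definition, choosing $X^o$ so that the complement $\Delta_0 := \oX \setminus X^o$ is large: I would take $\Delta_0$ to support a divisor positive enough (e.g.\ a union of several general very ample hypersurfaces) that $X^o$ is affine, the pair $(\oX,\Delta_0)$ is of log general type, and moreover every complete curve $C' \subset \oX$ meets $\Delta_0$ in at least three points in the rational case (and is already of positive genus otherwise), so that $C' \cap X^o = C' \setminus \Delta_0$ is a hyperbolic quasi-projective curve. This last feature is the one I want to exploit. Now fix a small normal neighbourhood $S$ of a point $p$, an open $U \subset S$, and a morphism $f\colon U \to X^o$ whose monodromy, the image of $\pi_1(U) \to \pi_1(X^o)$, is virtually abelian; the goal is to extend $f$ to a regular map $S \to \oX$ near $p$. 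Since $\oX$ is complete, $f$ is a rational map from the normal variety $S$ to the proper variety $\oX$, hence is defined outside a closed subset of codimension at least two. Forming the closure $\Gamma$ of its graph in $S \times \oX$ gives a proper birational $\pi\colon \Gamma \to S$, an isomorphism over the locus of definition, and $f$ extends over $p$ precisely when $C := \pi^{-1}(p)$ is a single point. Everything thus reduces to ruling out a positive-dimensional fibre $C \subset \oX$.

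The next step brings in the monodromy through the geometry of $C$. Suppose $C$ is positive dimensional and meets $X^o$, and pick an irreducible curve $C' \subseteq C$ through a point of $X^o$. By the choice of $\Delta_0$, the curve $C' \cap X^o$ is hyperbolic. The idea is that as $s \to p$ in $U$ the values $f(s)$ accumulate along $C'$, so that the small loops in $U$ linking the codimension-one part of $S \setminus U$ near $p$ are carried by $f_*$ to loops in $X^o$ homotopic to the puncture-loops of $C' \cap X^o$ about the points $C' \cap \Delta_0$. Since $C' \cap X^o$ is hyperbolic, these generate a non-virtually-abelian subgroup of $\pi_1(X^o)$, contradicting the hypothesis on $f$. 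Hence any positive-dimensional fibre $C$ must lie entirely in the boundary $\Delta_0$.

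It then remains to exclude a positive-dimensional fibre contained in $\Delta_0$, together with the case in which $S \setminus U$ has codimension at least two near $p$ (where the monodromy is trivial and therefore carries no information). For the first, I would induct on $\dim \oX$: the boundary $\Delta_0$ is again a normal complete variety of smaller dimension, and a suitable refinement of the choice of $X^o$ should let me apply the inductive hypothesis to control how the closure of $f$ can degenerate into $\Delta_0$. For the second, I would invoke a big-Picard/Kwack-type extension theorem, valid when $X^o$ is hyperbolically embedded in $\oX$: such a theorem extends a map from a punctured polydisk into $X^o$ across the missing analytic set to $\oX$, forcing $C$ to be a point. Arranging genuine hyperbolic embeddedness may require shrinking $X^o$ further, or choosing the positive boundary $\Delta_0$ sufficiently generically.

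The hard part will be precisely these two items, and above all making the monodromy step rigorous. The delicate claim is the ``limit monodromy'' assertion of the second paragraph — that loops about the boundary near $p$ really map to the puncture-loops of the limit curve $C' \cap X^o$ — which demands a careful analysis of the family near the exceptional fibre and is where the positivity of $\Delta_0$ (the log-general-type hypothesis) does the essential work. The pure higher-codimension case is genuinely different in flavour, since there the monodromy is trivial and one must rely entirely on the hyperbolicity of $X^o$ rather than on the group-theoretic hypothesis; reconciling the amount of shrinking needed for the monodromy mechanism and for the Kwack-type mechanism simultaneously is, I expect, the main obstacle.
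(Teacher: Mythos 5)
The paper does not actually prove this proposition; it is imported wholesale from \cite[Prop.~3.11]{C}. The argument there is to choose $X^o$ so that it is \emph{hyperbolically embedded} in $\oX$: embed $\oX$ in $\P^N$, remove $2N+1$ general hyperplane sections (Green's theorem on complements of hyperplanes, plus the facts that hyperbolic embeddedness passes to closed subvarieties and to open subsets), and then apply the Kiernan--Kobayashi/Kwack/Noguchi extension theorem, which extends \emph{every} holomorphic map $U \to X^o$ across the boundary to a map $S \to \oX$ with no monodromy hypothesis whatsoever. The AME property is then automatic. You do have this ingredient in hand --- your final paragraph invokes exactly this extension theorem --- but you use it only for the codimension-$\ge 2$ case and instead build the main argument on steps that do not hold up.

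Concretely, there are three gaps. (1) The ``limit monodromy'' step is the entire content of your main case and is unproved: the fact that values $f(s)$ accumulate along a curve $C'$ in the fibre of the graph closure does not produce any particular elements in the image of $\pi_1(V\cap U)\to\pi_1(X^o)$, and turning that heuristic into the statement that the puncture loops of $C'\cap X^o$ lie in (or are detected by) the local monodromy is precisely the hard part you defer. (2) The induction for fibres contained in $\Delta_0$ has no inductive structure to run on: $f$ maps into $X^o$, not into $\Delta_0$, so there is no morphism to a lower-dimensional pair to which an inductive hypothesis about the AME property could be applied, and $\Delta_0$ need not be normal or irreducible. (3) Your stated choice of $\Delta_0$ --- every complete curve meets it in at least three points --- constrains only algebraic curves; it does not give Brody hyperbolicity of $X^o$, let alone hyperbolic embeddedness, so even your Kwack step is unjustified as written. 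Strengthening the choice of $\Delta_0$ to force hyperbolic embeddedness (as in the cited proof) fixes (3) and simultaneously makes the case division and the monodromy analysis unnecessary.
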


In this paper we will consider the (very similar) {\it solvable monodromy extension} (SME) property instead of the AME property. The definition is precisely the same except that we replace ``abelian'' with ``solvable'' everywhere. Once again, if $X$ has the SME property then there exists a maximal SME compactification $X_{\sme}$. In this case, since any abelian group is solvable, $X$ also has the AME property and there exists a regular morphism $X_{\ame} \rightarrow X_{\sme}$. 

\subsection{Relation to log canonical models}

If $C$ is a smooth curve, then it has the SME property if and only if it is stable (meaning that it has genus $g > 1$ or genus $g=1$ with at least one puncture or genus $g=0$ with at least three punctures). Notice how these are precisely the curves of log general type. 

We make the following two (completely wild) speculations:
\vspace{.2cm}\\
{\bf Speculation \#1.} {\it If $X$ has the SME property then its log canonical model $X_{\lc}$ (assuming it exists) is a compactification of $X$. In particular, $X$ is of log general type. } 
\vspace{.2cm}\\
{\bf Speculation \#2.} {\it If $X$ has the SME property then there exists a regular morphism $X_{\lc} \rightarrow X_{\sme}$ extending the identity map on $X$.} 
\vspace{.2cm}\\
\noindent In the case of surfaces we prove in Propositions \ref{prop:surfgentype} and \ref{prop:surfregmap} that this is indeed true. It should also be possible to verify these speculations if $X$ has a smooth log minimal model. 

For general higher dimensional varieties we describe a possible (but very sketchy) approach to proving these two claims. Along the way we bring up some related questions which may be of independent interest. 

The final section gives a simple example showing that varieties of log general type need not have the SME property. We also describe a potential application, explained to me by Sean Keel, to partial resolution of singularities. 

\subsection{Acknowledgments}

I would like to thank Maksym Fedorchuk, Brendan Hassett, Johan de Jong and Sean Keel for helpful, interesting discussions and the anonymous referee for finding a mistake and suggesting additional improvements and references. 

\section{Preliminaries}

We briefly discuss the two main concepts being related in this paper: the SME property and log canonical pairs. 

\subsection{The solvable monodromy extension (SME) property}

The solvable monodromy extension property is a direct analogue of the abelian monodromy extension property from \cite{C} (just take that definition and replace abelian with solvable). 

\subsubsection{Local monodromy}

We summarize the definition of local monodromy from \cite[Sec. 2.1]{C}. Let $X$ be an open subvariety of a normal variety $\oX$. Next, consider an open subvariety $U \subset S$ of a normal variety $S$ together with a morphism $U \rightarrow X$. Fix a connected, reduced, proper subscheme $T \subset S$. Now choose a sufficiently small analytic neighbourhood $V$ of $T$. We define the {\it local monodromy around $T$} as the image of fundamental groups
$$\mbox{Im} \left( \pi_1(V \cap U) \rightarrow \pi_1(X) \right).$$

Since $T$ is connected $V \cap U$ is connected and so the image of $\pi_1(V \cap U)$ is defined (up to conjugation) without having to choose a base point. Most commonly we will take $T$ to be a point $p \in S \setminus U$ to obtain the local monodromy around $p$. 

\subsubsection{The SME property}

Given an open embedding of normal varieties $X \subset \oX$, the pair $(X,\oX)$ has {\em the solvable monodromy extension (SME) property} if given any $U \subset S$ as above the morphism $U \rightarrow X$ extends to a regular map $S \rightarrow \oX$ in a neighbourhood of $p$ whenever the local monodromy around $p$ is virtually solvable (recall that a group is {\it virtually solvable} if it contains a solvable subgroup of finite index). 

In this case $\oX$ is complete and we say that $\oX$ is an {\em SME compactification} of $X$. We say $X$ has the SME property if it has an SME compactification. One can also define the SME property for stacks (see \cite[Sec. 2.2.1]{C}) but for simplicity we will only consider varieties. 

\subsubsection{Example: the moduli space of curves}

The main example from \cite{C} of a variety with the AME property is the moduli space of curves (Theorem \ref{thmA:C}). It turns out the moduli space of curves also has the SME property. 

\begin{corollary}
$\oM_{g,n}$ is the maximal SME compactification of $\sM_{g,n}$.
\end{corollary}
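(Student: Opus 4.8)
The plan is to reduce the corollary to the analogous statement for the AME property, namely that $\oM_{g,n}$ is the maximal AME compactification of $\sM_{g,n}$ \cite[Thm. 4.1]{C}. The essential point is that, for the moduli space of curves, the SME and AME extension conditions are literally the same condition, so the two notions of compactification coincide and the maximal elements must agree.

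The key input is a rigidity property of the mapping class group. The relevant fundamental group $\pi_1(\sM_{g,n})$ is the mapping class group $\mathrm{Mod}_{g,n}$, and for any $U \rightarrow \sM_{g,n}$ the local monodromy around a point $p$ is, up to conjugacy, a subgroup of $\mathrm{Mod}_{g,n}$. By the Tits alternative for mapping class groups (Ivanov, McCarthy), every subgroup of $\mathrm{Mod}_{g,n}$ either contains a non-abelian free group or is virtually abelian. Since a non-abelian free group is not virtually solvable, every virtually solvable subgroup of $\mathrm{Mod}_{g,n}$ is in fact virtually abelian. Consequently, for any $U \subset S$ and any $p \in S \setminus U$, the local monodromy of $U \rightarrow \sM_{g,n}$ around $p$ is virtually solvable if and only if it is virtually abelian.

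With this equivalence in hand the corollary is formal. For an arbitrary compactification $\oX$ of $\sM_{g,n}$, the defining requirement for $(\sM_{g,n}, \oX)$ to be an SME compactification (extend the map whenever the monodromy is virtually solvable) is word-for-word the requirement for it to be an AME compactification (extend whenever the monodromy is virtually abelian), because the two hypotheses on the monodromy coincide. Hence the collection of SME compactifications of $\sM_{g,n}$ equals its collection of AME compactifications, and in particular their unique maximal elements agree, so $X_{\sme} = X_{\ame} = \oM_{g,n}$. Equivalently, and more explicitly: $\oM_{g,n}$ is an SME compactification by Theorem \ref{thmA:C} applied to the now-virtually-abelian monodromy, while any SME compactification is a fortiori an AME compactification; this produces birational morphisms $X_{\sme} \rightarrow \oM_{g,n}$ and $\oM_{g,n} = X_{\ame} \rightarrow X_{\sme}$ which, being mutually inverse birational morphisms of normal complete varieties restricting to the identity on $\sM_{g,n}$, are isomorphisms.

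I expect the only substantive ingredient to be the Tits alternative; every other step is bookkeeping given Theorem \ref{thmA:C} and the structural results of \cite{C}. The \emph{main obstacle}, such as it is, resides entirely in this deep rigidity of mapping class groups: without it one could imagine some $U \rightarrow \sM_{g,n}$ whose local monodromy is virtually solvable but not virtually abelian, on which the SME and AME conditions would genuinely differ and the corollary could fail. It is precisely the nonexistence of such examples that forces the two properties to coincide here.
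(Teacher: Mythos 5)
Your proof is correct and follows essentially the same route as the paper: reduce to the AME statement \cite[Thm. 4.1]{C} by showing that virtually solvable subgroups of the mapping class group are virtually abelian, so the SME and AME extension conditions coincide for $\sM_{g,n}$. The only difference is the source of the group-theoretic input --- the paper cites Birman--Lubotzky--McCarthy \cite{BLM} directly for the fact that solvable subgroups of $\pi_1(\sM_{g,n})$ are virtually abelian, whereas you derive the same fact from the Tits alternative for mapping class groups.
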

\begin{proof}
By \cite[Theorem B]{BLM} every solvable subgroup of $\pi_1(\sM_{g,n})$ is virtually abelian. This means that $\sM_{g,n}^{\ame} = \sM_{g,n}^{\sme}$ and the result follows from \cite[Thm. 4.1]{C} which states that $\oM_{g,n}$ is the maximal AME compactification of $\sM_{g,n}$. 
\end{proof}

\subsection{Log canonical pairs}

Consider a normal variety $X$ with a compactification $\oX$ so that the boundary $\Delta := \oX \setminus X$ is a normal crossing divisor. Note that in general one may have to first resolve $X$ in order to find such an $\oX$. 

We say that $X$ is of {\it log general type} if $(K_\oX + \Delta)$ is big. Then, assuming finite generation of the log canonical ring (say via the log minimal model program), the {\it log canonical model} of $X$ is 
$$X_{\lc} := \mbox{Proj} \left( \bigoplus_{m \ge 0} H^0(\O_{\oX}(m(K_\oX + \Delta)) \right).$$
If $X_{\lc}$ contains $X$ as an open subscheme then we say that $X_{\lc}$ is the {\it log canonical compactification of $X$}. Note that $X_{\lc}$ does not depend on the choice of $\oX$. By construction, assuming $X$ has (at worst) log canonical singularities, it follows that $X_{\lc}$ also has (at worst) log canonical singularities. 

In general, if $C,C'$ are two curves inside some proper variety $Y$, we will say that $C$ and $C'$ are numerically equivalent (denoted $C \sim C'$) if $C \cdot D = C' \cdot D$ for any Cartier divisor $D \subset Y$. 

\subsection{Some properties of AME compactifications} 

In \cite[Sect. 3]{C} we proved various basic results about AME compactifications. The key facts we used was that a subgroup of an abelian group is abelian and that the image of an abelian group is abelian. These facts also hold for solvable groups. Subsequently, the results from \cite[Sect. 3]{C} still hold if we replace ``AME'' with ``SME''. We now state three such results which we will subsequently use. 

\begin{lemma}\cite[Cor. 3.3]{C}\label{lem:SME1}
Suppose $(X,\oX)$ has the AME (resp. SME) property and let $Y \subset X$ be a closed, normal subscheme. If we denote by $\oY$ the normalization of the closure of $Y$ in $\oX$ then the pair $(Y,\oY)$ also has the AME (resp. SME) property.
\end{lemma}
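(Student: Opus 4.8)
The plan is to reduce the statement for $(Y,\oY)$ to the known SME property of $(X,\oX)$ by carefully tracking how local monodromy behaves under the inclusion $Y \hookrightarrow X$. Suppose we are given $U \subset S$ with $S$ normal, a morphism $g \colon U \rightarrow Y$, a point $p \in S \setminus U$, and the hypothesis that the local monodromy of $g$ around $p$ is virtually solvable. Composing with the closed immersion $Y \hookrightarrow X$ produces a morphism $f \colon U \rightarrow X$. The key observation is that the local monodromy of $f$ around $p$ is the image of the local monodromy of $g$ under the induced map $\pi_1(Y) \rightarrow \pi_1(X)$; since the image of a virtually solvable group is virtually solvable, $f$ has virtually solvable local monodromy around $p$. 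Because $(X,\oX)$ has the SME property, $f$ extends to a regular map $\tilde{f} \colon S \rightarrow \oX$ in a neighbourhood of $p$.

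The remaining task is to show that this extension $\tilde{f}$ actually factors through $\oY$ near $p$, so that it gives the desired extension of $g$. First I would note that $\tilde{f}$ maps the open set $U$ into $Y$, hence into the closure $\overline{Y}$ of $Y$ in $\oX$; since $\overline{Y}$ is closed and $\tilde{f}$ is continuous (in the analytic topology on a neighbourhood of $p$), the image of the whole neighbourhood lands in $\overline{Y}$. This shows $\tilde{f}$ factors set-theoretically through $\overline{Y}$. To lift $\tilde{f}$ to a morphism into the normalization $\oY \rightarrow \overline{Y}$, I would invoke the universal property of normalization: a morphism from a normal scheme (here $S$, or a normal neighbourhood of $p$ inside it) to $\overline{Y}$ that is dominant onto a component, or more precisely whose image is not contained in the non-normal locus in a problematic way, lifts uniquely to the normalization. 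Since $S$ is normal and the map is dominant onto $Y$ (a dense subscheme of $\overline{Y}$), the lift $S \rightarrow \oY$ exists and is unique, and it restricts to $g$ on $U$.

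The step I expect to be the main obstacle is the lifting to the normalization. The clean universal property of normalization applies to dominant maps from normal (integral) schemes, but here $S$ need not be irreducible of the same dimension as $Y$, and the map $U \rightarrow Y$ need not be dominant. The correct formulation is that a morphism from a normal scheme to a variety lifts to the normalization precisely when it lifts at the generic points of its image components, or equivalently one uses that normalization is an isomorphism over the normal locus and that the indeterminacy is contained in a codimension $\geq 2$ locus which one can control using normality of $S$. I would handle this by passing to the component structure carefully: on each irreducible component of a neighbourhood of $p$ in $S$, the map $\tilde{f}$ has image contained in $\overline{Y}$ with image meeting the normal locus of $\overline{Y}$ (since it meets $Y$, which is normal by hypothesis), so the lift exists on a dense open subset and extends over the boundary by the normality of $S$ together with the finiteness (hence properness) of the normalization morphism $\oY \rightarrow \overline{Y}$. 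Since this argument is formally identical to the one used for the AME case in \cite[Cor. 3.3]{C}, and the only inputs that change are the closure-under-images and closure-under-subgroups properties---both of which hold for solvable groups exactly as for abelian ones---the proof goes through verbatim with ``abelian'' replaced by ``solvable''.
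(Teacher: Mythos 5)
Your proof is correct and follows essentially the same route as the paper, which simply cites \cite[Cor.\ 3.3]{C} and observes that the only group-theoretic inputs (stability of solvability under passing to subgroups and homomorphic images) match the abelian case --- exactly the reduction you carry out by composing with $Y \hookrightarrow X$ and pushing the monodromy forward. Your handling of the two technical points (the image landing in $\overline{Y}$ by density of $U$ and closedness, and the lift to $\oY$ via finiteness of the normalization plus normality of $S$) is the same as in the cited argument.
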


\begin{lemma}\cite[Prop. 3.10]{C}\label{lem:SME2}
If $X$ has the AME (resp. SME) property and $i: Y \rightarrow X$ is a locally closed embedding then there exists a regular morphism $Y_{\sme} \rightarrow X_{\sme}$ (resp. $Y_{\ame} \rightarrow X_{\ame}$) which extends $i: Y \rightarrow X$.
\end{lemma}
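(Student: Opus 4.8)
The plan is to exhibit an explicit AME compactification of $Y$ that maps naturally to $X_{\ame}$, and then to invoke the maximality of $Y_{\ame}$. We treat the AME case; the SME case is word-for-word identical, using that a homomorphic image of a virtually solvable group is again virtually solvable. First I would let $Z$ denote the closure of $Y$ in $X_{\ame}$ and let $\oY$ be the normalization of $Z$, so that there is a finite morphism $\oY \to Z \hookrightarrow X_{\ame}$. Since $Y$ is normal, the normalization is an isomorphism over $Y$, hence $Y$ sits inside $\oY$ as a dense open subvariety and $\oY$ is a compactification of $Y$. The goal then reduces to two things: (a) showing that the pair $(Y,\oY)$ has the AME property, and (b) deducing from this the morphism $Y_{\ame} \to X_{\ame}$.

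For (a) I would verify the defining extension property directly, mirroring the proof of Lemma \ref{lem:SME1}. Given $U \subset S$ with $S$ normal and a morphism $g \colon U \to Y$ whose local monodromy around $p$ is virtually abelian, I compose with $i$ to form $h = i \circ g \colon U \to X$. The local monodromy of $h$ in $\pi_1(X)$ is the image of that of $g$ under the homomorphism $\pi_1(Y) \to \pi_1(X)$ induced by $i$, and a homomorphic image of a virtually abelian group is virtually abelian; hence the monodromy of $h$ is again virtually abelian. By the AME property of $(X, X_{\ame})$ the map $h$ then extends to a regular map $\tilde h \colon S \to X_{\ame}$ in a neighbourhood of $p$. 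Because $U$ is dense in $S$ and $\tilde h(U) \subset Y \subset Z$ with $Z$ closed, the map $\tilde h$ factors through $Z$.

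The step I expect to be the main obstacle is the remaining one: lifting $\tilde h \colon S \to Z$ through the normalization to a morphism $S \to \oY$ that still restricts to $g$ on $U$. Here I would exploit that $S$ is normal, that $\oY \to Z$ is finite and an isomorphism over $Y$, and that $g$ already provides a lift over $U$. Concretely, form the fibre product $W := S \times_{Z} \oY$, which is finite over $S$; the graph of $g$ over $U$ determines a section of $W \to S$ over $U$, and since $\oY \to Z$ is an isomorphism over $Y \supset g(U)$ this section identifies $W|_U$ with $U$. Its closure $\overline{\Gamma}$ in $W$ is therefore finite and birational over the normal variety $S$, hence an isomorphism onto $S$ by Zariski's main theorem. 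This yields the desired lift $S \to \oY$ extending $g$, proving that $(Y,\oY)$ has the AME property.

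Finally, for (b): since $(Y,\oY)$ is an AME compactification, $Y$ has the AME property, so the maximal compactification $Y_{\ame}$ exists and receives a birational morphism $Y_{\ame} \to \oY$. Composing with $\oY \to Z \hookrightarrow X_{\ame}$ gives a regular morphism $Y_{\ame} \to X_{\ame}$, which by construction restricts over $Y$ to $i$ followed by the inclusion $X \hookrightarrow X_{\ame}$. The delicate points to watch throughout are the density argument forcing $\tilde h$ into $Z$ and, above all, the normalization lift, where the normality of $S$ is essential.
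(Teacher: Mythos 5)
Your proposal is correct, and since the paper only cites this result from \cite[Prop. 3.10]{C} without reproducing a proof, your argument is the natural one: it is exactly the mechanism behind Lemma \ref{lem:SME1} (extend $i\circ g$ into $X_{\ame}$, force it into the closure of $Y$, lift through the normalization using normality of $S$ and Zariski's main theorem) combined with the maximality of $Y_{\ame}$ from \cite[Cor. 3.7]{C}. All the delicate points you flag --- the quotient of a virtually abelian (resp.\ solvable) group being virtually abelian (resp.\ solvable), the density argument, and the finite-birational-onto-normal lift --- are handled correctly, and your treatment properly accommodates $Y$ being merely locally closed by taking its closure inside $X_{\ame}$ rather than inside $X$.
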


\begin{lemma}\label{lem:SME3}
Suppose $X$ has the SME property and let $\oX$ be a compactification of $X$ equipped with a regular map $\pi: \oX \rightarrow X_{\sme}$. If $C \subset \oX$ is a curve so that the local monodromy around $C$ is virtually solvable then $\pi$ contracts $C$ to a point. 
\end{lemma}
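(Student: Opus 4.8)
The plan is to reduce the general statement to the special case $\oX = X_{\sme}$ with $\pi$ the identity, and then to rule out virtually solvable boundary curves on the maximal SME compactification itself. Throughout we use that $\pi$ restricts to the canonical open immersion $X \hookrightarrow X_{\sme}$, so that $\pi^{-1}(X) = X$ and no curve meeting $X$ is contracted; we may therefore assume $C \subseteq \oX \setminus X$. Suppose, for contradiction, that $\pi$ does not contract $C$, so that $D := \pi(C)$ is a curve contained in $X_{\sme} \setminus X$ and $C \to D$ is a finite dominant morphism.

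The first step is a comparison of local monodromy groups. Choose a small analytic neighbourhood $V_D$ of $D$ in $X_{\sme}$ and let $V_C$ be the component of $\pi^{-1}(V_D)$ containing $C$. Since $\pi$ is birational and restricts to the identity on $X$, the induced map $V_C \cap X \to V_D \cap X = V_D \setminus D$ is, over a neighbourhood of the generic point of $D$, a finite covering of punctured tubular neighbourhoods. Consequently the images of $\pi_1(V_C \cap X)$ and of $\pi_1(V_D \setminus D)$ in $\pi_1(X)$ are commensurable, that is, they coincide up to finite index. As being virtually solvable is invariant under passage to finite-index sub- and super-groups, the hypothesis that the local monodromy around $C$ is virtually solvable is equivalent to the local monodromy around $D$ being virtually solvable. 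This reduces the Lemma to the assertion that $X_{\sme}$ possesses no positive-dimensional boundary curve $D \subseteq X_{\sme} \setminus X$ whose local monodromy is virtually solvable.

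To attack this reduced statement I would exploit the SME property of $(X, X_{\sme})$ together with separatedness. Working over the normalization $D^{\nu}$, one forms the punctured tubular family $\rho : S \setminus D^{\nu} \to X$, where $S$ is a disc bundle over $D^{\nu}$ and $D^{\nu}$ is its zero section; its local monodromy around the (connected, proper) zero section is the local monodromy around $D$, hence virtually solvable. The SME property then produces a regular extension $S \to X_{\sme}$, which by density and separatedness must be the tautological one sending $D^{\nu}$ onto $D$. The goal is to contradict the \emph{maximality} of $X_{\sme}$: a solvable-monodromy boundary curve is, heuristically, superfluous structure that the finest SME compactification ought not to carry, so one expects to produce from $X_{\sme}$ a strictly finer SME compactification modifying it along $D$, which is impossible since every SME compactification receives a birational morphism \emph{from} $X_{\sme}$.

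The step I expect to be the main obstacle is precisely this last one: converting the heuristic ``the finest SME compactification has no solvable-monodromy boundary curve'' into an actual construction. The difficulty is that neither the SME extension property nor separatedness alone forbids such a curve — indeed the tautological extension above is perfectly regular and non-constant on $D$ — so the argument must invoke the explicit construction of the maximal SME compactification from \cite[Sect.\ 3]{C}, and in particular control how the local monodromy varies along $D$, in order to exhibit the required refinement. Once this is in place, the earlier reductions (the monodromy comparison and the identification of the extension with $\pi$ via separatedness) are purely formal.
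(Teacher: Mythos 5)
Your proposal has a genuine gap, and you have in fact located it yourself: the final step, in which ``the local monodromy around the curve is virtually solvable'' must actually be converted into a contraction, is the entire content of the lemma, and neither the tubular-neighbourhood extension (which, as you observe, is tautological) nor an appeal to the maximality of $X_{\sme}$ supplies it. The paper does not argue via maximality or by constructing a finer compactification at all. The missing idea is a reduction to a \emph{point}: choose a surface $S \subset \oX$ containing $C$ and not contained in the boundary, so that $S \cap X$ inherits the SME property by Lemma \ref{lem:SME1}; blow up $S$ until the proper transform $\tC$ of $C$ satisfies $\tC^2 < 0$; then blow $\tC$ \emph{down} to a point $q$ of a new surface $S''$. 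The local monodromy around $q$ equals that around $\tC$, hence is virtually solvable, and now the SME extension property applies in its native form --- extension of the map $S'' \cap X \rightarrow X$ over a neighbourhood of the point $q$ --- to produce a regular map $S'' \rightarrow X_{\sme}$ near $q$. Pulling back to $S'$, this map necessarily contracts $\tC$ (it factors through $S''$, where $\tC$ has become the point $q$), and comparing with the composition $S' \rightarrow \oX \xrightarrow{\pi} X_{\sme}$ on the dense open locus where both agree forces $\pi$ to contract $C$. Without this blow-up/blow-down device your plan stalls exactly where you predict it will.

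Two smaller points. First, your opening reduction is backwards: if $C$ meets $X$ then $\pi$ certainly does not contract $C$, so you cannot simply ``assume $C \subseteq \oX \setminus X$'' --- you would instead have to show that a curve meeting $X$ cannot have virtually solvable local monodromy. That statement is true, but it is a \emph{consequence} of the lemma (it is essentially the remark following it), not a permissible preliminary reduction. Second, your commensurability comparison between the monodromy around $C$ and around $D = \pi(C)$ is plausible but unnecessary: the paper's argument works directly with $C$ inside $\oX$ and never needs to transport the hypothesis to $X_{\sme}$.
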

\begin{remark}
In particular, this means that if $X$ has the SME property then its fundamental group is not virtually solvable. The analogous result for AME varieties also holds. 
\end{remark}
\begin{proof}
Choose a surface $S \subset \oX$, not contained in the boundary $\oX \setminus X$, but which contains $C$. Then the local monodromy around $C \subset S$ is virtually solvable and, by Lemma \ref{lem:SME1}, $S \cap X$ has the SME property. If we blow up $S$ then the proper transform $\tC$ of $C$ still has this property. Moreover, blowing up sufficiently we can assume that $\tC^2 < 0$.

Thus we end up with $\tC \subset S'$ where $\tC^2 < 0$ and a map $S' \rightarrow \oX$ which does not contract $\tC$. Moreover, the local monodromy around $\tC \subset S'$ is virtually solvable. But, since $\tC^2 < 0$, we can blow down $\tC$ to a point $q \in S''$. Then the local monodromy around $q$ is virtually solvable. This means that, in a neighbourhood of $q$, we get a regular map $S'' \rightarrow X_{\sme}$. Subsequently, the composition $S' \rightarrow \oX \xrightarrow{\pi} X_{\sme}$ contracts $\tC$ to a point. This means that $\pi$ contracts $C$ to a point.
\end{proof}

\section{A general property}

The following results, which we will apply later, serve as some indication that varieties satisfying the SME property are of log general type. 

\begin{proposition}\label{prop:main}
Let $X$ be a normal variety and $\oX$ some compactification such that $\Delta := \oX \setminus X$ is a divisor. If $C \subset \oX$ is a rational curve such that
\begin{itemize}
\item $C \cap X \ne \emptyset$ and $C \subset \oX_{\rm{smooth}}$, 
\item $(K_{\oX} + \Delta) \cdot C \le 0$ and $\Delta \cdot C \ge 3$
\end{itemize}
then there exists a curve $B_0$ and a map $F_0: \sC_0 := \P^1 \times B_0 \rightarrow \oX$ such that 
\begin{enumerate}
\item $F_0(\sC_0)$ is a surface, 
\item $F_0^{-1}(\Delta) = \{p_1, \dots, p_n\} \times B_0$ for some fixed points $p_1, \dots, p_n \in \P^1$,
\item there exists a section $\sigma$ of $\sC_0 \rightarrow B_0$ with $F_0(\sigma)=p$ for some $p \in X$. 
\end{enumerate}
\end{proposition}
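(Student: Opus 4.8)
The plan is to produce the family by deforming the normalization of $C$, while forcing the deformed curves to keep meeting $\Delta$ along a fixed divisor, and then cutting down by an incidence condition at a general point of $X$. Write $f_0 : \P^1 \to \oX$ for the normalization of $C$, set $d := \dim \oX$ and $\delta := \Delta \cdot C \ge 3$, and let $p_1,\dots,p_n \in \P^1$ be the distinct points of $f_0^{-1}(\Delta)$. Since $C \subset \oX_{\mathrm{smooth}}$, the pullback $f_0^* T_{\oX}$ is a vector bundle on $\P^1$, so deformation theory gives the standard lower bound
$$\dim_{[f_0]} \Hom(\P^1, \oX) \ \ge\ \chi(\P^1, f_0^* T_{\oX}) \ =\ -K_{\oX}\cdot C + d.$$
Because $(K_{\oX}+\Delta)\cdot C \le 0$ we have $-K_{\oX}\cdot C \ge \delta$, so this space is already large; this is exactly what the two numerical hypotheses are there to feed.

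Next I would fix the effective divisor $D_0 := f_0^*\Delta$ on $\P^1$, of degree $\delta$ and supported on $p_1,\dots,p_n$, and let $\H \subset \Hom(\P^1,\oX)$ be the locus of maps $f$ with $f^*\Delta \ge D_0$. Imposing this contact condition is at most $\delta$ conditions, so
$$\dim_{[f_0]} \H \ \ge\ -K_{\oX}\cdot C + d - \delta \ =\ -(K_{\oX}+\Delta)\cdot C + d \ \ge\ d.$$
Two features of $\H$ are decisive. First, every $f \in \H$ automatically satisfies $f^*\Delta = D_0$ exactly, since $\deg f^*\Delta = \delta = \deg D_0$ together with $f^*\Delta \ge D_0$ forces equality; this yields the clean boundary behaviour of part (2). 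Second, any automorphism of $\P^1$ preserving $\H$ must preserve $D_0$, and since $\deg D_0 = \delta \ge 3$ the subgroup of $\mathrm{Aut}(\P^1)$ fixing $D_0$ is finite — so $\H$ parametrizes genuinely distinct curves up to finite ambiguity. This is where $\Delta\cdot C \ge 3$ does its essential work, and it is what lets us avoid any bend-and-break degeneration: the product structure $\P^1 \times B_0$ will be guaranteed rather than forced by breaking.

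I would then consider the evaluation map $\mathrm{ev} : \P^1 \times \H \to \oX$, $(x,f)\mapsto f(x)$, on the component through $f_0$. Its source has dimension $\ge d+1$, so over a general point of its image the fibre has dimension $\ge 1$; since $C \cap X \ne \emptyset$, the image meets $X$ densely and I may choose the base point $p$ to lie in $X$. Picking a curve $B_0 \subset \mathrm{ev}^{-1}(p)$, I note that $B_0$ cannot lie in a single fibre of the projection to $\H$ (that fibre, inside $\mathrm{ev}^{-1}(p)$, is $f_0^{-1}(p)$, which is finite), so $B_0 \to \H$ is non-constant and, by the rigidity above, the associated curves genuinely move. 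Pulling back the universal map along $B_0 \to \H$ produces $F_0 : \sC_0 = \P^1 \times B_0 \to \oX$ with two-dimensional image, giving (1); the identity $f_b^*\Delta = D_0$ for every $b$ gives $F_0^{-1}(\Delta) = \{p_1,\dots,p_n\}\times B_0$, which is (2); and since a point of $B_0$ records a pair $(x_b,f_b)$ with $f_b(x_b)=p$, the assignment $b \mapsto (x_b,b)$ is a section $\sigma$ of $\sC_0 \to B_0$ with $F_0(\sigma)\equiv p \in X$, establishing (3). (The section need not be constant, which is all that (3) demands.)

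The hard part will be the dimension bookkeeping on $\H$: one must verify honestly that imposing $f^*\Delta \ge D_0$ drops the dimension by at most $\delta$ — a local contact computation at each $p_i$, where $C \subset \oX_{\mathrm{smooth}}$ guarantees $\Delta$ is a Cartier divisor so the condition is the vanishing of the first few Taylor coefficients of a defining equation composed with $f$ — and that the general fibre of $\mathrm{ev}$ over a point of $X$ is genuinely positive-dimensional rather than empty or of lower-than-expected dimension. A secondary point is to confirm that the moving curves do not degenerate so badly that $F_0$ fails to have surface image; here the finiteness of $\mathrm{Aut}(\P^1)$ fixing $D_0$, which is precisely the content of $\Delta\cdot C \ge 3$, is exactly what prevents $\mathrm{Aut}(\P^1)$ from absorbing the motion and collapsing the image to a curve.
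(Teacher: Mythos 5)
Your overall strategy is the same as the paper's (Mori's lower bound $-K_{\oX}\cdot C + \dim X$ for deformations of the normalization, cut the family down so that the intersection with $\Delta$ is combinatorially constant, then use an evaluation map to force a positive-dimensional family through a fixed point $p\in X$), but there is a genuine gap in the way you rigidify the family, concentrated in the claim that ``since $\deg D_0 = \delta \ge 3$ the subgroup of $\Aut(\P^1)$ fixing $D_0$ is finite.'' That is false: what matters is the number of points in the \emph{support} of $D_0$, not its degree. If $C$ meets $\Delta$ in one point with multiplicity $3$, then $D_0 = 3[p_1]$ and its stabilizer is the full two-dimensional stabilizer of $p_1$ in $PGL_2$; if it meets $\Delta$ in two points, the stabilizer is at least one-dimensional. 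This is not a removable technicality. Your locus $\H = \{f : f^*\Delta \ge D_0\}$ has $\dim \H \ge d$, but the group $G_0 = \mathrm{Stab}(D_0)$ acts on $\H$ by precomposition with orbits consisting of maps having the \emph{same image curve}, so the space of distinct curves has dimension only $\ge d - \dim G_0$. When $n := \#\supp(D_0) \le 2$ this is $\le d-1$, and the incidence count then gives a family of distinct curves through a general point $p$ of dimension $\ge (d - \dim G_0) + 1 - d \le 0$: nothing forces more than one curve (up to finite ambiguity) through $p$. Concretely, your curve $B_0 \subset \mathrm{ev}^{-1}(p)$ may be non-constant over $\H$ and yet lie entirely in a single $G_0$-orbit, in which case $F_0(\sC_0)$ is the single curve $f_0(\P^1)$ and conclusion (i) fails.

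The source of the loss is that fixing the full contact divisor $D_0$ costs $\delta$ conditions but only kills $\max(0,\,3-n)$ fewer automorphisms than the generic $3$, so for $n\le 2$ you overspend by $3-n$ dimensions relative to the paper. The paper avoids this by never pinning the marked points to fixed positions on $\P^1$: it first quotients by all of $\Aut(\P^1)$ (cost $3$, legitimate because $\Delta\cdot C\ge 3$ keeps the quotient nonempty of dimension $\ge \dim X + \Delta\cdot C - 3$), and only when the general deformed curve meets $\Delta$ in $n\ge 3$ distinct points does it additionally fix the \emph{moduli} of those points via a map $B' \to \oM_{0,n}$ (cost $n-3$); for $n\le 2$ no further condition is needed, since $\le 2$ points on $\P^1$ have no moduli and the family can be trivialized to $(\P^1,p_1,\dots,p_n)\times B_0$ afterwards. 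Your argument is correct, and in fact somewhat cleaner than the paper's, in the case where the general deformation meets $\Delta$ in at least three distinct points; to repair it you should replace the condition $f^*\Delta \ge D_0$ by the weaker condition that the cross-ratios of the points of $f^{-1}(\Delta)$ are constant (imposing nothing when there are at most two such points), and recover the constancy of $\{p_1,\dots,p_n\}\subset\P^1$ only at the end, after trivializing the one-dimensional family.
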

\begin{proof}
We will use the following non-trivial deformation theory result \cite{M}. The deformation space of morphisms $f: \tC \rightarrow \oX$ has dimension at least
$$- (K_{\oX} \cdot C) + (1-g_\tC) \cdot \dim(X)$$
where $\tC$ is the normalization of $C$ and $g_\tC$ its genus. In our case $g_\tC=0$ and $-K_{\oX} \cdot C \ge \Delta \cdot C$ so the space of deformations has dimension at least $\dim(X) + \Delta \cdot C$. 

Now consider such a family of deformations $\pi: \sC \rightarrow B$ where $\dim(B) \ge \dim(X) + \Delta \cdot C$. Since the automorphism group of $\tC$ is 3-dimensional we can restrict $\sC$ to a smaller family $\sC' \rightarrow B' \subset B$ where $\dim(B') = \dim(B)-3$ and such that for any fibre in $\sC'$ there are only finitely many other fibres with the same image in $\oX$. Notice that here we need $\Delta \cdot C \ge 3$ or else this family may be empty. 

Suppose the general fibre of $\sC'$ intersects $\Delta$ in $n \ge 1$ distinct points. If $n \ge 3$ then, after possibly blowing up $B'$ and taking a finite cover, we obtain a map $B' \rightarrow \oM_{0,n}$ to moduli space of genus zero curves with $n$ marked points. Since $\dim(\oM_{0,n}) = n-3$ a generic fibre of this map has dimension at least 
$$\dim(B')-n+3 \ge \dim(X) + \Delta \cdot C - n \ge \dim(X).$$ 
The restriction of $\sC'$ to such a fibre leaves us with a family of curves $\pi'': \sC'' \rightarrow B''$ and a map $F: \sC'' \rightarrow \oX$. If $n \le 2$ then we take this to be the original family. 

Since $\dim(\sC'') = \dim(B'')+1 \ge \dim(\oX)+1$ the general fibre of $F$ has dimension at least one. Choose a general point $p \in F(\sC'') \cap X$ and let $B''' := \pi''(F^{-1}(p)) \subset B''$. Restricting $\sC''$ gives us a family $\pi''': \sC''' \rightarrow B'''$ with $\dim(B''') \ge 1$ with a map $F: \sC''' \rightarrow \oX$. After restricting even further we can assume for convenience that $\dim(B''')=1$.  

Now, by construction, there exists an open subset $B_0 \subset B'''$ such that the restriction of $\sC'''$ to it is isomorphic to $\sC_0 := \P^1 \times B_0$. We also have a map $F_0: \sC_0 \rightarrow \oX$ which takes $\cup_i p_i \times B_0$ to $\Delta$ and the rest to $X$ (here $p_1, \dots, p_n$ denote our $n$ marked points). Finally, after possibly pulling back to a finite cover of an open subset of $B_0$, there exists a section $\sigma$ of $\sC_0 \rightarrow B_0$ which is in the preimage of $p \in X$. 
\end{proof}

\begin{corollary}\label{cor:main}
Suppose $X$ is smooth, has the AME property and denote by $\oX$ a simple, normal crossing compactification of $X$. If $C \subset \oX$ is a rational curve not contained in the boundary $\Delta := \oX \setminus X$, then $(K_{\oX} + \Delta) \cdot C > 0$. 
\end{corollary}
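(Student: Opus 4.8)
The plan is to argue by contradiction: suppose that $(K_{\oX}+\Delta)\cdot C\le 0$, and set $n:=\Delta\cdot C$. I would first dispose of the case $n\le 2$ directly. Here $C\cap X$ is $\P^1$ with at most two punctures, so a tubular neighbourhood $V$ of $C$ in $\oX$ has $\pi_1(V\cap X)$ free on $n-1\le 1$ generators; in particular the local monodromy around $C$ is cyclic or trivial, hence virtually abelian. Passing to a common resolution $Z$ of the birational map $\oX\dashrightarrow X_{\ame}$ gives a compactification of $X$ carrying a regular morphism $\pi:Z\to X_{\ame}$, and the proper transform $\tilde C\subset Z$ still has virtually abelian local monodromy. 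By the AME analogue of Lemma \ref{lem:SME3}, $\pi$ contracts $\tilde C$ to a point. But $\pi$ restricts to the identity on $X$, and $C\cap X\ne\emptyset$ is a genuine curve, so $\tilde C$ cannot be contracted — a contradiction.

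This leaves the case $n=\Delta\cdot C\ge 3$, where the hypotheses of Proposition \ref{prop:main} are met. Applying it produces a family $F_0:\sC_0=\P^1\times B_0\to\oX$ whose fibres are rational curves meeting $\Delta$ in the fixed points $p_1,\dots,p_n$ and, crucially, a section $\sigma$ with $F_0(\sigma)=p$ for a single point $p\in X$. Thus every member of the family passes through the common point $p$, and the family is genuinely moving since $F_0(\sC_0)$ is a surface. I would then complete the one-dimensional base to a smooth projective curve $\bar B\supset B_0$. Because a complete, positive-dimensional family of irreducible rational curves through the fixed point $p$ cannot exist (bend-and-break, \cite{M}), the morphism $F_0$ does not extend to a morphism $\P^1\times\bar B\to\oX$: over some boundary point of $\bar B$ the family degenerates, and the induced rational map $\bar F:\P^1\times\bar B\dashrightarrow\oX$ acquires an indeterminacy point $q$ whose resolution contracts an exceptional curve onto a nonconstant rational ``bubble'' $C'\subset\oX$. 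Arranging, by shrinking, that $q$ lies over a node away from the marked-point locus, the curve $C'$ passes through a point of $X$, so $C'\cap X\ne\emptyset$.

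Now I would feed $q$ into the AME property. On a small neighbourhood $S$ of $q$ the map $\bar F$ is regular on $S\setminus\{q\}$ (equivalently on $S$ minus the central fibre) and takes values in $X$, while the fundamental group of this punctured neighbourhood is trivial (a punctured four-ball) or cyclic (the complement of the central fibre) — in either case virtually abelian. If $X$ has the AME property, then, after replacing $\oX$ by a resolution dominating $X_{\ame}$, the map $S\setminus\{q\}\to X_{\ame}$ must extend regularly across $q$; equivalently, the bubble $C'$ must be contracted by $\oX\dashrightarrow X_{\ame}$. Since this birational map is the identity on $X$ and $C'\cap X\ne\emptyset$, the curve $C'$ is not contracted, and we reach the desired contradiction. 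Hence $(K_{\oX}+\Delta)\cdot C>0$.

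The main obstacle is the middle step: making the bend-and-break degeneration precise and extracting from it an honest indeterminacy point together with a bubble curve that provably meets $X$, all while keeping the local monodromy at $q$ virtually abelian. Controlling where the marked points $p_i$ specialize — so that the node producing $C'$ does not fall on $\Delta$ — and verifying that the extension forced by AME genuinely contracts $C'$ rather than merely resolving along the central fibre, are the delicate points. It is precisely the ``one contracted section'' form of the output of Proposition \ref{prop:main}, namely conclusion (3), that furnishes the fixed point needed to run bend-and-break and should make this feasible.
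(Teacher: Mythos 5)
Your reduction to the case $\Delta\cdot C\ge 3$ via (the AME analogue of) Lemma \ref{lem:SME3} and your invocation of Proposition \ref{prop:main} match the paper, but in the main case you then run genuine bend-and-break and try to derive the contradiction from a bubble $C'$ in a degenerate fibre, and this is exactly where the argument breaks down --- at the two points you flag yourself. First, nothing guarantees that a bubble meets $X$: the limiting cycle over a boundary point of $\bar B$ can consist of a main component meeting $X$ together with bubbles lying entirely inside $\Delta$ (the intersection number with $\Delta$ is conserved in the limit and can be absorbed entirely by components inside the boundary), and then the fact that $f:\oX\to X_{\ame}$ contracts those bubbles is no contradiction at all. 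Second, you cannot ``arrange by shrinking'' where the indeterminacy points sit: $q$ may lie on a marked-point section $p_i\times\oB_0$ or on components of the central fibre mapping into $\Delta$, so $V\cap\bar F^{-1}(X)$ is the complement of an a priori uncontrolled curve configuration and the claim that the local monodromy at $q$ is virtually abelian is unsupported. Since your contradiction needs both facts, the proof as written does not close.

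The paper's argument is the mirror image of yours and avoids both problems: instead of letting the family break and chasing a bubble, it uses the AME property to show the family \emph{cannot} break. The map $f\circ F_0:\sC_0\to X_{\ame}$ (with $f:\oX\to X_{\ame}$ the regular extension of the identity, available because $\Delta$ is simple normal crossing) has virtually abelian local monodromy along all of $\P^1\times(B\setminus B_0)$, since the preimage of the boundary in $\sC=\P^1\times B$ is the normal-crossing configuration of vertical fibres and horizontal sections $p_i\times B$; hence it extends to a \emph{morphism} $F:\sC\to X_{\ame}$ on the whole trivial family. Conclusion (iii) of Proposition \ref{prop:main} then finishes it: $F(\osigma)=p\in X$ forces $\osigma$ to avoid every $p_i\times B$, so $\osigma$ is a horizontal curve $\{q\}\times B$ with $\osigma^2=0$; but an irreducible curve contracted by a morphism from a smooth projective surface onto a surface has negative self-intersection (Hodge index against the pullback of an ample class). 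That single computation replaces your entire degeneration analysis; if you insist on the bend-and-break framing you must first prove the extension statement anyway, at which point no bubble ever appears.
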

\begin{proof}
By Lemma \ref{lem:SME3} we know $C \cdot \Delta \ge 3$. So suppose $(K_{\oX} + \Delta) \cdot C \le 0$ and consider the family $\sC_0 = (\P^1, p_1, \dots, p_n) \times B_0 \rightarrow B_0$ as in Proposition \ref{prop:main}. This family comes equipped with a map $F_0: \sC_0 \setminus \{\cup_i p_i \times B_0\} \rightarrow X$ and a section $\sigma$ such that $F_0(\sigma)=p \in X$. Compactify $B_0$ to some smooth curve $B$ and $\sC_0$ to the trivial product $\sC = (\P^1,p_1, \dots, p_n) \times B$. 

Since $\Delta$ is simple, normal crossing, we have a regular map $f: \oX \rightarrow X_{\ame}$ which extends the identity map on $X$. By the AME property, $f \circ F_0: \sC_0 \rightarrow X_{\ame}$ extends to a regular map $F: \sC \rightarrow X_\ame$. Then $F(\overline{\sigma})=p$ which means that $\osigma$ does not intersect any $p_i \times \oB_0$ (here $\osigma$ denotes the closure of $\sigma$). This means that the image of $\osigma$ under the projection from $\sC$ to $(\P^1,p_1, \dots, p_n)$ is a single point $q$. Thus $\osigma$ is just $\{q\} \times B$ which means $\osigma^2=0$. But this is impossible because $F: \sC \rightarrow X_\ame$ contracts $\osigma$ to a point.
\end{proof}

It would be interesting (and useful) to generalize Proposition \ref{prop:main} to curves $C$ of higher genus. In order to do that one needs a log version of the bend and break lemma (since then the analogue of Corollary \ref{cor:main} would almost say that $K_{\oX} + \Delta$ is nef). 
\vspace{.2cm} \\
{\bf Question 1.} {\it Is there a log version of the bend and break lemma? }

\section{The case of surfaces}

In this section suppose $X$ is a normal surface and denote by $\oX$ a simple normal crossing compactification of $X$. We denote by $\Delta \subset \oX$ the boundary $\Delta := \oX \setminus X$. In this case we have the following MMP for surfaces. 

\begin{theorem}\cite[Thm. 3.3, 8.1]{F}\label{thm:MMPsurfaces}
There exists a sequence of contractions 
$$(\oX,\Delta) = (\oX_0,\Delta_0) \xrightarrow{\phi_0} (\oX_1,\Delta_1) \xrightarrow{\phi_1} \dots \xrightarrow{\phi_{k-1}} (\oX_k,\Delta_k) = (X^*,\Delta^*)$$
such that each $(\oX_i,\Delta_i)$ is log-canonical and one of the following two things hold:
\begin{enumerate}
\item $K_{X^*} + \Delta^*$ is semi-ample or
\item there exists a morphism $g: X^* \rightarrow B$ such that $-(K_{X^*} + \Delta^*)$ is $g$-ample and $\dim(B) < 2$. 
\end{enumerate}
\end{theorem}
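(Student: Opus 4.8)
The plan is to run the log minimal model program for the pair $(\oX, \Delta)$ directly, exploiting the fact that in dimension two all the required ingredients are available and, crucially, small contractions never occur (so no flips are needed). I would begin by recording that the initial pair is log canonical: a simple normal crossing compactification is log smooth, hence log canonical, so $(\oX_0, \Delta_0) := (\oX, \Delta)$ is a legitimate first term of the sequence.

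The heart of the argument is a single inductive step. Given a log canonical pair $(\oX_i, \Delta_i)$, if $K_{\oX_i} + \Delta_i$ is already nef I would stop and pass to the final analysis below. Otherwise, by the cone theorem for log canonical surfaces there is a $(K_{\oX_i}+\Delta_i)$-negative extremal ray $R \subset \overline{NE}(\oX_i)$, and by the contraction theorem this ray is realized by a morphism $\phi_i: \oX_i \to \oX_{i+1}$ with connected fibres and relative Picard number one. The decisive feature of surfaces is that the exceptional locus of $\phi_i$ can never have codimension two, so only two possibilities arise. Either $\phi_i$ is birational, contracting a single curve and strictly dropping the Picard number; or $\phi_i$ has positive-dimensional fibres over a base $B$ with $\dim B < 2$. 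In the latter case $-(K_{\oX_i}+\Delta_i)$ is $\phi_i$-ample by the choice of $R$, so setting $(X^*,\Delta^*) := (\oX_i,\Delta_i)$ and $g := \phi_i$ lands us in alternative (2). In the birational case one must verify that $(\oX_{i+1}, \Delta_{i+1})$, with $\Delta_{i+1} := \phi_{i\ast}\Delta_i$, is again log canonical; this preservation under a $(K+\Delta)$-negative divisorial contraction is where the discrepancy bookkeeping has to be carried out.

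Termination is automatic: each birational contraction decreases the Picard number, a positive integer, so after finitely many steps we either hit a fibre-type contraction (case (2)) or reach a pair $(X^*, \Delta^*)$ with $K_{X^*}+\Delta^*$ nef. It then remains to upgrade nefness to semi-ampleness, which is alternative (1). For this I would invoke log abundance for surfaces, namely that a nef log canonical divisor $K_{X^*}+\Delta^*$ on a surface is semi-ample: when $K_{X^*}+\Delta^*$ is big this follows from the base-point-free theorem, and the remaining cases of numerical dimension one and zero are handled by the surface classification.

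The main obstacle is not the formal MMP bookkeeping but the two input statements in the log canonical category, which is considerably subtler than the Kawamata log terminal one: the cone and contraction theorems, and the semi-ampleness (abundance) conclusion. Log canonical pairs carry curves along which $K+\Delta$ is trivial and on which the usual Kawamata--Viehweg vanishing degenerates, so both establishing the contraction $\phi_i$ and checking that log canonicity is inherited by $(\oX_{i+1}, \Delta_{i+1})$ demand the delicate surface-specific techniques assembled in \cite{F}, on which the present statement relies.
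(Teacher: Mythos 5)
Your proposal is correct and follows essentially the same route as the paper, which simply quotes this theorem from Fujino's work on the log MMP for surfaces and, in the accompanying remark, notes exactly the final step you describe: the MMP yields nefness of $K_{X^*}+\Delta^*$ in case (i), and log abundance for log canonical surfaces upgrades this to semi-ampleness. The genuinely hard ingredients you flag (cone/contraction theorems and abundance in the log canonical surface category) are precisely the content of the cited reference, so deferring to it there is appropriate.
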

\begin{remark} The MMP states that in case (i) the divisor $K_{X^*} + \Delta^*$ is nef. Then by the abundance theorem for log-canonical surfaces, it is also semi-ample. 
\end{remark}

\begin{proposition}\label{prop:surfgentype}
If $X$ is a normal surface which has an AME compactification then $X$ is of log general type. 
\end{proposition}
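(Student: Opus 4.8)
The plan is to run the log minimal model program of Theorem \ref{thm:MMPsurfaces} and to rule out, one by one, every outcome in which $K_{X^*}+\Delta^*$ fails to be big, using the positivity forced by the AME hypothesis through Corollary \ref{cor:main} and Lemma \ref{lem:SME3}. First I would reduce to the case that $X$ is smooth: passing from $X$ to its smooth locus removes only finitely many points, which changes neither the log canonical ring (hence not whether $X$ is of log general type) nor the AME property, since a map $U\to X_{\mathrm{sm}}\subset X$ that extends over $\oX$ still extends. So assume $X$ smooth with a fixed simple normal crossing compactification $\oX$ and boundary $\Delta$. Because each step of the program contracts a $(K+\Delta)$--negative curve, the spaces of pluri--log--canonical sections are preserved, so $K_{\oX}+\Delta$ is big if and only if $K_{X^*}+\Delta^*$ is big; it therefore suffices to show that the program terminates in case (i) with $(K_{X^*}+\Delta^*)^2>0$.

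The device on which everything hinges --- and the step I expect to be most delicate --- is the transfer of intersection data from the output model $(X^*,\Delta^*)$, where the useful curves appear, back to the fixed smooth model $\oX$, where Corollary \ref{cor:main} and Lemma \ref{lem:SME3} apply. Let $\mu:\oX\to X^*$ be the composite morphism. If $C\subset X^*$ is a curve avoiding the finitely many singular points of $X^*$ and the finitely many images of $\mu$--exceptional curves, then its strict transform $D\subset\oX$ is disjoint from the exceptional locus of $\mu$, whence by the projection formula
$$(K_{\oX}+\Delta)\cdot D=(K_{X^*}+\Delta^*)\cdot C,\qquad \Delta\cdot D=\Delta^*\cdot C,\qquad D^2=C^2,$$
with $D$ of the same genus as $C$ and not contained in $\Delta$ precisely when $C$ is not contained in $\Delta^*$. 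Thus a general member of any covering family or fibration on $X^*$ pulls back to a curve on $\oX$ carrying the same numbers and genus. The point I would check carefully is that a general member of the relevant family genuinely avoids the finite bad locus, so that passing to the strict transform really does preserve these numbers.

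With this in hand the bad cases fall as follows. In case (ii) the general fibre $F$ of $g$ (or, when $\dim B=0$, a general member of a covering family of rational curves) is rational with $(K_{X^*}+\Delta^*)\cdot F<0$ by adjunction; transferring to $\oX$ yields a rational curve not contained in $\Delta$ of negative $(K_{\oX}+\Delta)$--degree, contradicting Corollary \ref{cor:main}. In case (i) with numerical dimension at most one, semiampleness gives a fibration $X^*\to Z$ whose general fibre $F$ satisfies $(K_{X^*}+\Delta^*)\cdot F=0$ and $F^2=0$, so $F$ has genus at most one by adjunction. If $F$ is rational the transferred curve again contradicts Corollary \ref{cor:main}. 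If $F$ is elliptic, adjunction forces $\Delta^*\cdot F=0$, so the transferred curve $D\subset\oX$ is an elliptic curve lying in the interior $X$; a neighbourhood of $D$ retracts onto it, so the local monodromy around $D$ is the abelian group $\pi_1(D)=\Z^2$, and Lemma \ref{lem:SME3} (in its AME form) forces the map $\oX\to X_{\ame}$ to contract $D$, which is absurd since that map restricts to the identity on $X\supset D$.

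The only remaining possibility is numerical dimension zero, so $K_{X^*}+\Delta^*\equiv 0$. If $X^*$ is uniruled, a covering family of rational curves, on which $K+\Delta$ vanishes, contradicts Corollary \ref{cor:main} after transfer. If $X^*$ is not uniruled then $K_{X^*}$ is pseudoeffective while $\Delta^*\equiv -K_{X^*}$ is effective, forcing $K_{X^*}\equiv 0$ and $\Delta^*=0$; then, up to finitely many points, $X$ coincides with the compact surface $X^*$, which has Kodaira dimension zero and hence virtually abelian fundamental group, so the remark following Lemma \ref{lem:SME3} is contradicted. This appeal to the Enriques--Kodaira classification in the last case is the one external input I would want to isolate. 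Having excluded every alternative, the program ends in case (i) with $K_{X^*}+\Delta^*$ big, so $X$ is of log general type.
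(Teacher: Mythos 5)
Your overall strategy --- run the surface MMP of Theorem \ref{thm:MMPsurfaces}, transfer curves from $(X^*,\Delta^*)$ back to $\oX$, and kill every non-big outcome with Corollary \ref{cor:main} and the AME hypothesis --- is exactly the paper's, and several of your cases (the fibration cases $\dim B=1$ in both (i) and (ii), and the elliptic-fibre argument via local monodromy) go through essentially as in the paper. The genuine gap is the sub-case of (ii) with $\dim(B)=0$, i.e.\ $-(K_{X^*}+\Delta^*)$ ample. There your argument needs a rational curve on the log del Pezzo surface $X^*$ whose general deformation avoids the finitely many singular points of $X^*$ and the images of the $\mu$-exceptional curves, so that the projection formula gives $(K_{\oX}+\Delta)\cdot D=(K_{X^*}+\Delta^*)\cdot C<0$. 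You flag this as ``the point I would check carefully,'' but it is not a routine check: a dominating family of rational curves on a normal surface can very well have every member passing through a fixed point (cone-type configurations), and producing a member of some family that misses a prescribed finite set containing the singular locus is precisely the surface instance of the paper's Question 2, which is posed there as open in general. The paper sidesteps this entirely: it invokes Keel--McKernan to conclude that $X^*\setminus\Delta^*$ (hence $X$) is $\C^\times$-connected, so $\pi_1(X)$ is virtually abelian, contradicting the AME property. If you want to stay with your curve-theoretic route, the fix is to drop the requirement that $C$ avoid the bad locus and instead use the discrepancy formula $K_{\oX}+\Delta=\phi^*(K_{X^*}+\Delta^*)+\sum_i a_iE_i$ with $a_i\le 0$ on a minimal log resolution (as the paper does in its $\dim(B)=0$ case (i) analysis): then for \emph{any} non-exceptional rational curve $C\not\subset\Delta^*$ one gets $(K_{\oX}+\Delta)\cdot D\le(K_{X^*}+\Delta^*)\cdot C<0$, and Corollary \ref{cor:main} applies. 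As written, though, the step is unjustified.

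A secondary, more minor issue is your terminal case $K_{X^*}+\Delta^*\equiv 0$ with $X^*$ not uniruled. You correctly deduce $\Delta^*=0$ and $K_{X^*}\equiv 0$, but then treat $X^*$ as if it were a smooth surface of Kodaira dimension zero to conclude that $\pi_1$ is virtually abelian. Since $X^*$ may have (log canonical) singularities and $X$ only coincides with the \emph{smooth locus} of $X^*$ minus finitely many points, you need the statement that the fundamental group of the smooth part of a log canonical surface with numerically trivial log canonical class is virtually abelian --- this is Zhang's theorem \cite{Zh}, not the smooth Enriques--Kodaira classification. (The paper avoids this by working on the smooth model $\oX$ and splitting according to whether all discrepancies vanish, in which case $\Delta=0$ and $\oX=X$ is a smooth surface with trivial canonical class, or some discrepancy is negative, in which case $\kappa(\oX)=-\infty$ and a rational curve exists.) Both points are repairable, but each requires an input you have not supplied.
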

\begin{proof}
Since we are trying to prove $X$ is of log general type we can remove the singular locus of $X$ and assume it is smooth. Now, compactify $X$ and consider a log minimal resolution $(\oX,\Delta)$. First we show that applying Theorem \ref{thm:MMPsurfaces} we end up in case (i). 

Suppose to the contrary that we are in case (ii). Denote the composition of contractions $\phi: (\oX,\Delta) \rightarrow (X^*, \Delta^*)$ and denote by $E_\phi$ the exceptional locus. There are two cases depending on whether $\dim(B)=0$ or $\dim(B)=1$. 

If $\dim(B)=0$ then $-(K_{X^*}+\Delta^*)$ is ample. By \cite[Cor. 1.6]{KeM}, $X^* \setminus \Delta^*$ is $\C^\times$-connected which means that $X$ is also $\C^\times$-connected. But then, by \cite[Cor. 7.9]{KeM}, we find that $\pi_1(X)$ is virtually abelian. This is impossible since $X$ has the AME property. See also \cite{Zh} for a similar approach.

If $\dim(B)=1$ then consider a connected component $C$ of a general fibre of $g$. Since $X^*$ is normal $C$ is smooth. The image $\phi(E_\phi) \subset X^*$ has codimension $2$ so $C \cap \phi(E_\phi) = \emptyset$. Hence $C$ has a unique lift to $\oX$ which we also denote $C$. Then 
\begin{equation}\label{eq:1}
(K_{\oX} + \Delta) \cdot C = (K_{X^*} + \Delta^*) \cdot C < 0
\end{equation}
because $-(K_{X^*} + \Delta^*)$ is $g$-ample. Since $C$ is a fibre of $g$ this means $C^2=0$ and hence 
$$\deg(K_C) + \Delta \cdot C = (K_\oX + C) \cdot C + \Delta \cdot C < 0.$$
Since $C$ is not contained in $\Delta$ we have $\Delta \cdot C \ge 0$. Thus $C$ has genus zero and $\Delta \cdot C \le 1$. So $C$ intersects $\Delta$ in at most one point, meaning that $\A^1 \subset C \setminus \Delta$. This is a contradiction since by Lemma \ref{lem:SME1}, $(C, \Delta \cap C) \subset (\oX, \Delta)$ is an AME pair.

So we must be in case (i). Consider the map $\pi: X^* \rightarrow B$ induced by $K_{X^*} + \Delta^*$. We must show that $\dim(B)=2$. If $\dim(B)=1$ then consider a general fibre $C$ of $\pi$ as above. Then 
$$(K_{\oX} + \Delta) \cdot C = (K_{X^*} + \Delta^*) \cdot C = 0$$
which is similar to equation (\ref{eq:1}). Then arguing as before, either $C$ has genus one and $\Delta \cdot C = 0$ or it has genus zero and $\Delta \cdot C \le 2$. In the first case $C$ does not intersect $\Delta$ and we get a contradiction by Lemma \ref{lem:SME1}. Similarly, in the second case $C$ intersects $\Delta$ in at most two points, meaning that $\C^\times \subset C \setminus \Delta$ which is again a contradiction by Lemma \ref{lem:SME1}. 

If $\dim(B)=0$ then $K_{X^*}+\Delta^*$ is trivial. We have 
$$K_{\oX} + \Delta = \phi^*(K_{X^*}+\Delta^*) + \sum_i a_i E_i$$
where $-1 \le a_i \le 0$ and $\{E_i\}$ are the irreducible, exceptional divisors. The left inequality follows since $(X^*,\Delta^*)$ is log canonical, while $a_i \le 0$ is a consequence of the fact $(\oX,\Delta)$ is a minimal log resolution (see \cite[Claim 66.3]{Ko1}). 

Thus, either all $a_i$ are zero and $\Delta = 0$ (which means $K_{\oX}$ is trivial) or some $a_i < 0$ (which means $\kappa(\oX) = -\infty$). The former cannot happen because the fundamental group of $X=\oX$ would be virtually abelian. In the second case, we can take any rational curve $C \subset \oX$ not contained in $\Delta$. Then $(K_\oX + \Delta) \cdot C = \sum_i a_i (E_i \cdot C) < 0$ which is a contradiction by Corollary \ref{cor:main}. 

Thus $\dim(B)=2$ and $X$ must be of log general type. 
\end{proof}

\begin{corollary}\label{cor:surfgentype}
Suppose $X$ is a normal surface with at worst log canonical singularities which has the AME property. Then  $X_{\lc}$ is a compactification of $X$.
\end{corollary}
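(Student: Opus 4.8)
The plan is to show that the composite birational morphism $\oX \xrightarrow{\phi} X^* \xrightarrow{\pi} X_{\lc}$ (coming from the MMP together with the morphism induced by $K_{X^*}+\Delta^*$) restricts to an open immersion on $X$, so that $X_{\lc}$ is a compactification. First, since $X$ has an AME compactification, Proposition \ref{prop:surfgentype} gives that $X$ is of log general type, so $K_{\oX}+\Delta$ is big and (assuming finite generation) $X_{\lc}$ exists. Bigness of $K_{\oX}+\Delta$ rules out case (ii) of Theorem \ref{thm:MMPsurfaces}: there $-(K_{X^*}+\Delta^*)$ is ample relative to $B$, so $K_{X^*}+\Delta^*$ would be non-positive on a covering family of curves and hence not pseudo-effective. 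We are therefore in case (i), $K_{X^*}+\Delta^*$ is semi-ample and big, and the induced morphism $\pi\colon X^* \to X_{\lc}$ is birational, contracting precisely the curves $C$ with $(K_{X^*}+\Delta^*)\cdot C = 0$. It now suffices to show that no curve meeting the smooth locus of $X$ is contracted by $\phi$ or by $\pi$.

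For the MMP contractions $\phi$, each step contracts a $(K+\Delta)$-negative extremal ray, spanned by a rational curve $C$ by the cone theorem. By Corollary \ref{cor:main}, a rational curve meeting the smooth locus of $X$---hence not contained in the boundary---satisfies $(K_{\oX}+\Delta)\cdot C > 0$ and so cannot be contracted. Thus $\phi$ contracts only curves disjoint from the smooth locus: boundary curves, together with the exceptional curves of the resolution lying over the singular points of $X$.

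For the final contraction $\pi$, suppose some curve $C \subset X^*$ with $(K_{X^*}+\Delta^*)\cdot C = 0$ meets the smooth locus of $X$. As $\pi$ is birational we have $C^2 < 0$, and adjunction gives $\Delta^*\cdot C = C^2 - 2g(C) + 2 < 2 - 2g(C)$. If $g(C)\ge 1$ this forces $\Delta^*\cdot C < 0$, which is absurd; if $g(C)=0$ then $\Delta^*\cdot C \le 1$, so $C$ meets the boundary in at most one point and $\A^1 \subset C\cap X$. By Lemma \ref{lem:SME1} the pair $(C\cap X, C)$ would then have the AME property, the very contradiction exploited in the proof of Proposition \ref{prop:surfgentype}. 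Hence $\pi$ contracts no curve meeting the smooth locus either.

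Combining the two paragraphs, $\oX \to X_{\lc}$ carries the smooth locus of $X$ isomorphically onto an open subset of $X_{\lc}$. The hardest step, which I would treat last, is matching the singular points: one must check that the exceptional curves lying over the log canonical singularities of $X$ are contracted by $\oX \to X_{\lc}$ back to those same singularities. I expect this to follow from the two facts recalled earlier---that $X_{\lc}$ has at worst log canonical singularities and is independent of the chosen resolution $\oX$---since $X$ and $X_{\lc}$ are then both obtained from $\oX$ by contractions agreeing over the smooth locus, and normality should force the two surfaces to coincide at the singular points as well. Granting this, $X \hookrightarrow X_{\lc}$ is an open immersion and $X_{\lc}$ is a compactification of $X$.
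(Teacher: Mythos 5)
Your core argument --- that neither the MMP steps nor the semi-ample contraction can contract a curve meeting the smooth locus of $X$, using Corollary \ref{cor:main} for rational curves and an adjunction/monodromy dichotomy for the rest --- is essentially the paper's argument. But the paper sets things up so that this is \emph{all} that needs to be proved: it takes $\oX$ to be a simple normal crossing compactification of $X$ itself, with the log canonical singularities of $X$ left intact in the interior, and runs Fujino's MMP for (possibly singular) log surfaces on $(\oX,\Delta)$. Once one knows that every contracted curve lies in the boundary, $X$ sits untouched inside $X_{\lc}$ and the corollary follows immediately. By resolving $X$ first, you have created an extra problem that the paper deliberately avoids.

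That extra problem is where your proof has a genuine gap, and it is not a technicality. You prove that no curve meeting $X^{\rm smooth}$ is contracted, but you never prove that the exceptional curves of the resolution lying over $\mathrm{Sing}(X)$ \emph{are} all contracted by $\oX \to X_{\lc}$, nor that they are contracted to points with the same local structure as the original singularities. Both are needed for $X$ (rather than some partial resolution of $X$) to be an open subscheme of $X_{\lc}$, and neither is automatic: whether such an exceptional curve is $(K+\Delta)$-trivial depends delicately on how the boundary is chosen over $\mathrm{Sing}(X)$ (consider a simple elliptic singularity, where the exceptional elliptic curve $E$ has $K_{\tilde X}\cdot E = -E^2 > 0$ if $E$ is not placed in the boundary), and you never specify this convention. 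The closing appeal to normality does not substitute for an argument --- two normal surfaces obtained from $\oX$ by contracting different collections of curves are genuinely different surfaces. A smaller issue in the same spirit: in your final-contraction paragraph, concluding that $\A^1 \subset C \cap X$ from $\Delta^*\cdot C \le 1$ requires knowing that $C$ also avoids the images of the exceptional loci and the singular points; otherwise $C \cap X$ could be a rational curve with several punctures, whose fundamental group is free and non-abelian, and the contradiction with Lemma \ref{lem:SME1} evaporates.
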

\begin{proof}
Compactify $X$ to $\oX$ as in Theorem \ref{thm:MMPsurfaces}. By Proposition \ref{prop:surfgentype} $L^* := K_{X^*} + \Delta^*$ is semi-ample. It suffices to show that each $\phi_i$ as well as the map induced by $L^*$ only contracts curves in the boundary. 

Suppose $\phi_i$ contracts a curve $C$ which does not lie in the boundary. Then $(K_{\oX_i}+\Delta_i) \cdot C \le 0$ and $C$ is either rational or an elliptic curve which does not intersect $\Delta_i$ or $\mbox{sing}(\oX_i)$. The second case is not possible since the local monodromy around $C$ would be abelian (contradicting the fact that $X$ has the AME property). The first case is not possible by Corollary \ref{cor:main}.

The fact that the map induced by $L^*$ only contracts curves along the boundary follows similarly. 
\end{proof}

\begin{proposition}\label{prop:surfregmap}
Suppose $X$ is a normal surface with at worst log canonical singularities which has a maximal SME compactification $X_{\sme}$. Then there is a regular morphism $X_{\lc} \rightarrow X_{\sme}$ extending the identity map on $X$.
\end{proposition}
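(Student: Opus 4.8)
The plan is to exhibit both $X_{\lc}$ and $X_{\sme}$ as contractions of one common simple normal crossing compactification $(\oX,\Delta)$ and then to descend the map to $X_{\sme}$ across the map to $X_{\lc}$ by a rigidity argument. Since $X$ has the SME property it also has the AME property, so by Corollary \ref{cor:surfgentype} the log canonical model $X_{\lc}$ is a compactification of $X$; fix $(\oX,\Delta)$ as in Theorem \ref{thm:MMPsurfaces}. Running the MMP of that theorem and invoking Corollary \ref{cor:surfgentype} (we land in case (i), so $L^* := K_{X^*}+\Delta^*$ is semi-ample) produces a birational morphism $g: \oX \to X_{\lc}$ extending the identity on $X$, namely the composition of the $\phi_i$ with the morphism induced by $L^*$; by Corollary \ref{cor:surfgentype} all of its exceptional curves lie in the boundary $\Delta$. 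Since $\Delta$ is simple normal crossing, the SME analogue of the results of \cite[Sect. 3]{C} simultaneously gives a regular map $\pi: \oX \to X_{\sme}$ extending the identity on $X$.

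First I would reduce to a single contraction claim. Because $X_{\lc}$ has at worst log canonical, hence normal, singularities and $g$ is a proper birational morphism of surfaces, $g$ has connected fibres and $g_*\O_{\oX} = \O_{X_{\lc}}$. By the rigidity lemma (equivalently, since $g_*\O_{\oX}=\O_{X_{\lc}}$ and $X_{\sme}$ is separated), $\pi$ descends to a morphism $h: X_{\lc} \to X_{\sme}$ with $h \circ g = \pi$ as soon as $\pi$ is constant on every fibre of $g$; this $h$ is then automatically birational and restricts to the identity on $X$, since both $g$ and $\pi$ do. Away from its exceptional locus $g$ is an isomorphism, so it suffices to show that $\pi$ contracts every $g$-exceptional curve. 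As these occur in connected configurations and $\pi$ sends a connected union of contracted curves to a single point, it is enough to treat one irreducible $g$-exceptional curve $C$.

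Next I would check that every such $C$ has virtually solvable local monodromy, so that Lemma \ref{lem:SME3} forces $\pi$ to contract it. Let $Z$ be the connected component of the $g$-exceptional locus containing $C$, contracted by $g$ to a point $x$; since $g|_X$ is the identity and $g$ is birational, $x$ cannot be an interior point, so $x \in X_{\lc}\setminus X$. For a small neighbourhood $V$ of $Z$ the open part $V \cap X$ is biholomorphic to $U_x \setminus \Delta^*$ for a small neighbourhood $U_x$ of $x$, where $\Delta^*$ denotes the boundary of $X_{\lc}$. Hence the local monodromy around $C$ is a quotient of the local fundamental group of the log canonical pair $(X_{\lc},\Delta^*)$ at $x$. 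The decisive input is that this local fundamental group is virtually solvable: in the log general type situation the configurations contracted by the log canonical model are, up to finite quotient, quotient, simple elliptic, or cusp type, whose (boundary-punctured) links have finite, virtually abelian, and solvable fundamental groups respectively. Granting this, Lemma \ref{lem:SME3} applies and $\pi$ contracts $C$, completing the factorization.

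The main obstacle is the last step, namely establishing uniformly that the local fundamental group of a log canonical surface pair is virtually solvable over all boundary configurations contracted by the log canonical model. This is precisely where one must appeal to the classification of log canonical surface singularities and the resulting description of their link (and boundary-complement) fundamental groups; the results of \cite{KeM} already used in the proof of Proposition \ref{prop:surfgentype} are of exactly this nature and should supply what is required. By contrast, the construction of $g$ and $\pi$ and the rigidity-lemma descent are entirely formal.
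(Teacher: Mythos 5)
Your overall strategy is the same as the paper's: exhibit $X_{\lc}$ and $X_{\sme}$ as images of a common simple normal crossing compactification $\oX$, and use Lemma \ref{lem:SME3} to show that every curve contracted by $\oX \to X_{\lc}$ has virtually solvable local monodromy, hence is also contracted by $\oX \to X_{\sme}$. Two points diverge from the paper. First, your claim that the contracted point $x$ ``cannot be an interior point'' is not justified in the paper's setup: since $X$ is allowed to have log canonical singularities while $\oX$ carries a simple normal crossing boundary, the morphism $\oX \to X_{\lc}$ also contracts exceptional curves lying over the interior singular points of $X$, and this is exactly the case the paper treats separately by invoking Proposition \ref{prop:solvable} (a normal surface point is at worst log canonical if and only if its local fundamental group is solvable or finite, after Kawamata and Wagreich). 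Your argument would still cover this case if you dropped the claim and applied Proposition \ref{prop:solvable} directly, so the slip is repairable, but as written the case analysis is incomplete.

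Second, the step you yourself flag as the ``main obstacle'' --- that the local fundamental group of the boundary complement of a log canonical surface pair at a boundary point is virtually solvable --- is precisely the content the paper supplies and you do not. The paper does not get it from \cite{KeM} (which enters only in Proposition \ref{prop:surfgentype}, for the log Fano case), nor from the quotient/simple elliptic/cusp trichotomy, which classifies log canonical singularities with empty boundary; instead it quotes the classification of two-dimensional log canonical pairs \cite[Thm. 4.15]{KoM}, by which the complement of the boundary near such a point is a finite quotient of $\C^2\setminus\{x=0\}$ or $\C^2\setminus\{x=0,y=0\}$, so the local monodromy there is in fact virtually abelian. With that reference in place your descent argument closes up; without it the proof is incomplete at its decisive step.
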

\begin{proof}
Denote by $(\oX,\Delta)$ a minimal simple normal crossing compactification of $X$ and denote by $(X_{\lc},\Delta_{\lc})$ its log canonical model. Then we have regular morphisms 
$$X_{\lc} \xleftarrow{\pi_1} \oX \xrightarrow{\pi_2} X_{\sme}.$$
Consider a connected, exceptional curve $E \subset \oX$ of $\pi_1$. If $\pi_1(E)$ is a point which does not intersect the boundary of $X_{\lc}$ then the local monodromy around $E$, is the same as the local monodromy around $\pi_1(E)$ which, by Proposition \ref{prop:solvable} is virtually solvable. Thus, by Lemma \ref{lem:SME3}, it follows that $\pi_2$ must contract $E$ to a point and hence $\pi_2$ factors through $\pi_1$.

If $\pi_1(E)$ intersect the boundary of $X_{\lc}$ then the type of singularity at $\pi_1(E)$ is very restricted (see \cite[Thm. 4.15]{KoM} for a list of possible singularities). Locally around $\pi_1(E)$, the complement of the boundary looks like the quotient of $\C^2 \setminus \{x=0\}$ or $\C^2 \setminus \{x=0,y=0\}$ by a finite group. Thus the monodromy is virtually abelian and, again as above, $\pi_2$ must contract $E$ to a point and hence $\pi_2$ factors through $\pi_1$.
\end{proof}

\begin{proposition}\label{prop:solvable}
Let $X$ be a normal surface and $x \in X$ a point. Then the following are equivalent:
\begin{enumerate}
\item $X$ has an at worst log canonical singularity at $x$,
\item the local fundamental group of $X$ around $x$ is solvable or finite.
\end{enumerate}
\end{proposition}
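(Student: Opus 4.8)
The plan is to prove the equivalence of the two conditions by analyzing the structure of surface singularities through their resolutions. The key insight is that for a normal surface singularity, both the log canonicity condition and the structure of the local fundamental group are encoded in the dual graph of the minimal resolution, so the strategy is to use the existing classification of log canonical surface singularities and match it against the known computations of local fundamental groups.

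\bigskip
\noindent\emph{The direction (i) $\Rightarrow$ (ii).} First I would invoke the classification of log canonical surface singularities, which states that the germ $(X,x)$ with at worst log canonical singularity is either a quotient singularity $\C^2/G$ for a finite subgroup $G \subset GL_2(\C)$ (the log \emph{terminal} case), or one of a short list of more degenerate types: simple elliptic, cusp, or quotients thereof (see \cite[Thm. 4.15]{KoM}). For the quotient singularities, the local fundamental group of the link is exactly $G$ up to finite ambiguity; since the finite subgroups of $GL_2(\C)$ that arise here are precisely those whose quotient is log terminal, and these have solvable or finite fundamental groups, condition (ii) holds. For the remaining log canonical (non-log-terminal) types, one computes the local fundamental group directly from the Seifert-fibered or torus-bundle structure of the link: simple elliptic singularities have links that are circle bundles over an elliptic curve (giving nilpotent, hence solvable, $\pi_1$), while cusp singularities have links that are torus bundles over $S^1$ with hyperbolic monodromy, whose fundamental groups are solvable (poly-cyclic, being extensions of $\Z^2$ by $\Z$). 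Thus in every log canonical case $\pi_1^{\mathrm{loc}}$ is solvable or finite.

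\bigskip
\noindent\emph{The direction (ii) $\Rightarrow$ (i), equivalently $\neg$(i) $\Rightarrow$ $\neg$(ii).} For the converse I would argue by contraposition: if $(X,x)$ is \emph{not} log canonical, then I must produce a non-virtually-solvable local fundamental group. The tool here is the observation that a non-log-canonical singularity has, on its minimal resolution, a dual graph that is sufficiently complicated to force a hyperbolic structure on the link. Concretely, non-log-canonical normal surface singularities have links that carry a geometric structure modeled on $\widetilde{SL_2(\R)}$ or are of hyperbolic type, and correspondingly their fundamental groups contain a non-abelian free subgroup (or surject onto a Fuchsian group of hyperbolic type), which cannot be virtually solvable. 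The cleanest route is to use the numerical criterion: log canonicity fails exactly when the discrepancies force an orbifold Euler characteristic to become positive in the ``hyperbolic'' regime, and the corresponding orbifold fundamental group is then a cocompact Fuchsian group, which is not virtually solvable.

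\bigskip
\noindent The main obstacle I anticipate is the boundary between the log terminal and strictly log canonical cases, and more delicately the sharp transition to the non-log-canonical regime. The difficulty is that ``solvable'' is a coarser condition than ``finite,'' so one must verify that the log canonical types contributing infinite but solvable fundamental groups (simple elliptic and cusp singularities) are correctly captured, and—crucially on the other side—that the first non-log-canonical singularities already jump to non-solvable fundamental groups with no intermediate solvable-but-non-log-canonical cases slipping through. I expect this is best handled by reducing both conditions to a single numerical invariant of the dual graph (the sign of the orbifold Euler characteristic of the associated Seifert structure, or the discrepancy computation) and showing that the thresholds for log canonicity and for solvability of $\pi_1$ coincide exactly, so that no gap opens up between the two characterizations.
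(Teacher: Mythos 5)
Your proposal is correct and follows essentially the same route as the paper, which simply cites Kawamata \cite{K} and Wagreich \cite{W} and observes that combining them gives the equivalence: your two directions are precisely the content of those references (the classification of log canonical surface singularities as quotient, simple elliptic, cusp, and their finite quotients, matched against the classification of normal surface singularities with finite or solvable local fundamental group). The only difference is that you reconstruct the underlying arguments explicitly rather than quoting the two classifications, and your attention to the threshold cases (the non-solvable but finite binary polyhedral groups on one side, and the jump to non-virtually-solvable groups just past log canonicity on the other) is exactly where the cited results do the work.
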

\begin{proof}
This follows by combining the results in \cite{K} and \cite{W}. 
\end{proof}

\section{The case of higher dimensional varieties}

\subsection{Generalizing Proposition \ref{prop:surfgentype}}\label{sec:gen1}

Suppose $X$ is a normal variety of arbitrary dimension which has an AME compactification. We would like to show that $X$ is of log general type by imitating the proof of Proposition \ref{prop:surfgentype}. For this purpose we can assume $X$ is smooth and we compactify it to some $\oX$ so that $\Delta := \oX \setminus X$ is a simple normal crossing divisor. 

The analogue of Theorem \ref{thm:MMPsurfaces} in this case is the (partially conjectural) log MMP. It states that there exists a sequence of birational maps 
$$(\oX,\Delta) = (\oX_0,\Delta_0) \overset{\phi_0}{\dashrightarrow} (\oX_1,\Delta_1) \overset{\phi_1}{\dashrightarrow} \dots \overset{\phi_{k-1}}{\dashrightarrow} (\oX_k,\Delta_k) = (X^*,\Delta^*)$$
such that each $(\oX_i,\Delta_i)$ is a log-canonical pair and each $\phi_i$ is either a divisorial contraction or a flip. As before, we denote the composition $\phi$ and note that $\phi^{-1}$ has no exceptional divisors. The resulting pair $(X^*,\Delta^*)$ satisfies one of the following two properties:
\begin{enumerate}
\item $K_{X^*} + \Delta^*$ is nef or
\item there exists a morphism $g: X^* \rightarrow B$ such that $-(K_{X^*} + \Delta^*)$ is $g$-ample and $\dim(B) < \dim(X)$. 
\end{enumerate}

First we would like to argue that we must be in case (i). Assume instead that we are in case (ii). If $\dim(B) > 0$ consider a general fibre $F$ and let $\Delta_F := \Delta^*|_F$. Then $K_F \cong K_{X^*}|_F$ and hence $K_F + \Delta_F = (K_{X^*} + \Delta^*)|_F$ is anti-ample. But the exceptional locus of $\phi^{-1}$ is codimension so there exists an open subset $F^o \subset F$ which is not of log general type but which sits inside $X$ and hence has an AME compactification. By induction on the dimension of $X$ this is impossible.

So we are left with considering the situation where $\dim(B)=0$. In this case $-(K_{X^*} + \Delta^*)$ is ample. When $X$ was a surface we used \cite{KeM}. Two analogues of such a result in higher dimensions were posed, for instance, during an open problem session at the AIM workshop ``Rational curves on algebraic varieties'' (May 2007):
\begin{itemize}
\item Is the fundamental group of the smooth locus of a log Fano variety finite?
\item Is a log Fano variety $(X^*,\Delta^*)$ log rationally connected? In other words, is there a rational curve passing through any two given points which intersects $\Delta^*$ only once? 
\end{itemize}
For us, an affirmative answer to a strictly easier question would suffice: 
\vspace{.2cm} \\
{\bf Question 2.} {\it If $(X^*,\Delta^*)$ is log Fano is there a rational curve which avoids any given locus $L$ of codimension at least $2$? }
\vspace{.2cm} \\
Taking $L$ to be the union of the exceptional locus of $\phi^{-1}$ and the singular locus we lift such a curve to $C \subset X$. Then, by Corollary \ref{cor:main}, we have 
$$(K_{X^*} + \Delta^*) \cdot C = (K_{\oX} + \Delta) \cdot C > 0$$
which contradicts the fact that $-(K_{X^*} + \Delta^*)$ is ample. 

So, assuming the answer to Question 2 is ``Yes'', we find that $K_{X^*} + \Delta^*$ is nef. The abundance conjecture, which was known in the case of surfaces (and also threefolds \cite{KMM}), would then imply that $K_{X^*}+\Delta^*$ is semi-ample. So we can consider the induced map $\pi: X^* \rightarrow B$. 

If $\dim(X) > \dim(B) > 0$ then choose a general fibre $F$. Then, proceeding as above, $K_F \cong K_{X^*}|_F$ and hence $K_F + \Delta_F$ is trivial (where $\Delta_F := \Delta^*|_F$). Thus, by induction on $\dim(X)$, this is a contradiction. 

If $\dim(B)=0$ then $K_{X^*}+\Delta^*$ is trivial. If $X^*$ were smooth then we would get a contradiction as follows. If $\Delta^* = 0$ then $X^*$ is Calabi-Yau and hence its fundamental group is virtually abelian. On the other hand, if $\Delta^* \ne 0$ then choose a point in $X^*$ which is not in the image of the exceptional locus of $\phi$. By the bend and break lemma we can find a rational curve $C$ through this point. Then, since $(K_{X^*} + \Delta^*) \cdot C < 0$, one can find a map $F_0$ and in Proposition \ref{prop:main}. Since $C$ does not belong to the image of the exceptional locus of $\phi$, this map can be lifted to a simple, normal crossing compactification of $X$ where we get a contradiction as in the proof of Corollary \ref{cor:main}. 

To push this argument through when $X^*$ is singular we could use an affirmative answer to the following question: 
\vspace{.2cm} \\
{\bf Question 3.} {\it Suppose $(X^*,\Delta^*)$ is log Calabi-Yau, meaning that $K_{X^*}+\Delta^*$ is trivial:
\begin{enumerate}
\item If $\Delta^* = 0$, is the fundamental group of the smooth locus of $X^*$ virtually abelian? 
\item If $\Delta^* \ne 0$, can you find a rational curve in $X^*$ which avoids the singular locus and is not contained in the image of the exceptional locus of $\phi$? 
\end{enumerate}}

Having ruled out the cases $\dim(X) > \dim(B)$ it follows that $\dim(X) = \dim(B)$ and $X$ is of log general type. 

\subsection{Generalizing Proposition \ref{prop:surfregmap}}\label{sec:gen2}

Let $(X_{\lc},\Delta_{\lc})$ be the log canonical model of $X$ and resolve it to a simple normal crossing compactification $(\oX,\Delta)$. Subsequently we have the following regular morphisms
$$X_{\lc} \xleftarrow{\pi_1} \oX \xrightarrow{\pi_2} X_{\sme}.$$
We would like to factor $\pi_1$ as a sequence of extremal divisorial contractions
$$(\oX,\Delta) \xrightarrow{\phi_0} (\oX_1,\Delta_1) \xrightarrow{\phi_1} \dots \xrightarrow{\phi_{k-1}} (\oX_k,\Delta_k) = (X_{\lc},\Delta_{\lc})$$
and to show by induction that:
\begin{enumerate}
\item $\pi_2$ descends to a regular map $\oX_i \xrightarrow{p_i} X_{\sme}$ and
\item any curve $C \subset \oX_i$ contracted by $\phi_i$ is also contracted by $p_i$. 
\end{enumerate}
Notice that (ii) implies that $p_i$ descends to a regular map $p_{i+1}: \oX_{i+1} \rightarrow X_{\sme}$ (so it suffices to prove (ii)). 

Unfortunately, it is not at all clear how to do this. One obvious approach is to hope that for a log canonical pair $(X,\Delta)$, the local fundamental group around a point $x \in X$ is virtually solvable. In the case of surfaces the answer to this question was ``yes'' but, as the referee points out, \cite[Thm. 2]{Ko2} gives a counter-example already in the case of 3-folds. In fact, \cite[Question 24]{Ko2} essentially asks if there are {\em any} natural restrictions on such local fundamental groups. 

\section{Some final remarks}

\subsection{Two examples}

Choose four points on $\P^2$ in general position and denote by $X$ the complement of the six lines through them. Then $X$ has the SME property with $X_{\sme}$ isomorphic to the blowup of $\P^2$ at these four points. In fact, $X \cong M_{0,5}$ parametrizing five points on $\P^1$ and this blowup is just $X_{\sme} \cong \oM_{0,5}$. 

On the other hand, choose $n \ge 4$ lines in general position on $\P^2$ and let $X$ be the complement of their union. Then by an old result of Zariski \cite{Za} the fundamental group of $X$ is abelian. Subsequently, $X$ does not have the SME (or AME) property even though it is of log general type (with log canonical model $X_{\lc} = \P^2$). 

\subsection{Resolution of singularities}

The following observation goes back to a conversation with Sean Keel a few years ago. 

Denote by $X(r,n)$ the moduli space of $n$ hyperplanes in $\P^{r-1}$. $X(r,n)$ has a particular compactification $\oX(r,n)$ known as Kapranov's Chow quotient. Keel and Tevelev \cite{KT} point out that the singularities of the boundary of $\oX(r,n)$ are arbitrarily complicated. 

On the other hand, one can show that $\oX(r,n)$ is an SME compactification. Hence, if Speculations \#1 and \#2 are correct, we obtain a regular map $X(r,n)_{\lc} \rightarrow \oX(r,n)$ which gives a canonical way to (partially) resolve arbitrarily bad singularities. 
\vspace{.2cm} \\
{\bf Question 4.} {\it Is there a regular map $X(r,n)_{\lc} \rightarrow \oX(r,n)$?}
\vspace{.2cm} \\

\bibliographystyle{amsalpha}

\end{document}